\newtheorem{thm}{Theorem}
\journal{}
\begin{document}

\begin{frontmatter}

\title{Traveling wave solutions for the generalized (2+1)-dimensional Kundu-Mukherjee-Naskar equation}

%
\author[mymainaddress]{Minrong Ren}

\author[mymainaddress]{YuQian Zhou\corref{mycorrespondingauthor}}
\cortext[mycorrespondingauthor]{Corresponding author}
\ead{cs97zyq@aliyun.com}
\author[mysecondaddress]{Qian Liu}
\address[mymainaddress]{College of Applied Mathematics, Chengdu University of Information Technology, Chengdu 610225, Sichuan, P. R. China}
\address[mysecondaddress]{School of Computer Science and Technology, Southwest Minzu University, Chengdu 610041, Sichuan, P. R. China}

\begin{abstract}
\par\noindent In this paper, we consider two types of traveling wave systems of the generalized Kundu-Mukherjee-Naskar equation. Firstly, due to the integrity, we obtain their energy functions. Then, the  dynamical system method is applied to study bifurcation behaviours of the two types of traveling wave systems to obtain corresponding global phase portraits  in different parameter bifurcation sets. According to them, every bounded and unbounded orbits can be identified clearly and investigated carefully which correspond to various traveling wave solutions of the generalized Kundu-Mukherjee-Naskar equation exactly. Finally, by integrating along these orbits and calculating some complicated elliptic integral, we obtain all type I and type II traveling wave solutions of the generalized Kundu-Mukherjee-Naskar equation without loss.
\end{abstract}

\begin{keyword}
GKMN equation\sep traveling wave solution\sep bifurcation \sep dynamical system
\MSC[2010] 35C07\sep35Q55\sep34C23
\end{keyword}
\end{frontmatter}
\section{Introduction}\label{sec1}
\par In this paper, we consider the following generalized (2+1)-dimensional Kundu-Mukherjee-Naskar (GKMN) equation \cite{t1}
\begin{equation}\label{1}
  iq_{t}+aq_{xy}+ibq(qq^{*}_{x}-q^{*}q_{x})=0,~~i=\sqrt{-1},
\end{equation}
where complex function $q=q(x,y,t)$ represents the profile of soliton,  $q^{*}$ is the complex conjugation of $q(x,y,t)$,  $x$, $y$ and $t$ are the spatial and temporal variables respectively. Real parameters $a$ and $b$ are the dispersion coefficient and nonlinear coefficient respectively. This equation can be used to describe optical wave propagation through coherently excited resonant waveguides, especially in the phenomenon of the bending of light beams\cite{t1a}.
\par When the coefficients $a=1$ and $b=2$, the GKMN equation degenerates to the classical KMN equation
\begin{equation}\label{2}
  iq_{t}+q_{xy}+2iq(qq^{*}_{x}-q^{*}q_{x})=0,
\end{equation}
which was first proposed by Kundu, Mukherjee and Naskar in 2014\cite{t2,t2a,t2b}. Besides description of the dynamics of optical soliton propagation in optical fibers, Eq. (\ref{2}) also can be used extensively to address the problems of oceanic rogue waves, hole waves and an ion acoustic wave in a magnetized plasma\cite{t2b,t3f,t3}.
\par Since the KMN equation is completely integrable and possesses the dynamic characteristics similar to the standard nonlinear Schr\"{o}dinger equation\cite{t2b,t3}, it can be regarded as an integrable generalization of the well-known nonlinear Schr\"{o}dinger equation
\begin{equation}\label{3}
  q_{t}+iq_{xx}+2i|q|^{2}q=0.
\end{equation}
Unlike the conventional Kerr type nonlinearity in the Eq. (\ref{3}), the nonlinear terms of Eq. (\ref{2}) can be considered as the current nonlinearity caused by chirality\cite{t3a}. Hence, Eq. (\ref{3}) only allows bright (dark) soliton for focusing (defocusing) nonlinearity or with anomalous (normal) dispersion, whereas Eq. (\ref{2}) admits both bright and dark soliton solutions regardless of its positive spatial dispersive term\cite{tm}.
\par Traveling wave solutions of the KMN equation have been always focused on by people. In 2015, Mukherjee showed the connection between Eq. (\ref{2}) and Kadomtsev Petviashvili equation and obtained its one-soliton solution, two-soliton solution and static lump solution by using Hirota bilinear method\cite{t3}. In 2017, Wen\cite{t3e} pointed out that the solutions, satisfying the focusing nonlinear Schr\"{o}dinger  equation
\begin{equation}\label{4}
  q_{y}+iq_{xx}+2i|q|^{2}q=0
\end{equation}
and the complex modified Korteweg-de Vries equation
\begin{equation}\label{5}
  q_{t}+q_{xxx}+6|q|^{2}q_{x}=0,
\end{equation}
must satisfy Eq. (\ref{2}). So, by generalizing the $n$-fold Darboux transformation of Eqs. (\ref{4}) and (\ref{5}) to the perturbation $(n;M)$-fold Darboux transformation\cite{t3e}, he obtained the higher order rogue wave solutions of Eq. (\ref{2}). Recently, as a generalization model of Eq. (\ref{2}), the GKMN equation has aroused people's more extensive interests and attentions.
In 2018, Peng applied ansatz method to obtain the bright soliton, dark soliton and power series solutions of Eq. (\ref{1}) and constructed the complexitons through the tanh method\cite{t4}. In 2019, Yildilm got bright, dark, singular, combo bright-dark, combo singular and singular periodic solitons of Eq. (\ref{1}) by using the modified simple equation method, Riccati function method and so on\cite{t4a,t4b,t4c,t4d,t4e}. In the same year, Ekici explored the plane wave solutions of Eq. (\ref{1}) via the extended trial function method\cite{t4ff}. Later, Kudryashov used the Jacobi elliptic functions\cite{t4f} to construct the general solution of the Eq. (\ref{1}).  With the aid of extended rational sinh-Gordon equation expansion method\cite{t5a}, Sulaiman obtained the trigonometric functions solutions of Eq. (\ref{1}). In addition, Jhangeer applied the direct extended algebraic approach to Eq. (\ref{1}) to derive the complex waves\cite{t5b}. In 2020, Rizvi\cite{t6a}  got dark, bright, periodic U-shaped and singular solitons through the generalized Kudryashov method. Subsequently, Kumar\cite{t6aa} discussed singular, dark, combined dark-singular solitons and other hyperbolic solutions by using the csch method, extended tanh-coth method and extended rational sinh-cosh method. Meanwhile, Talarposhti\cite{t7} and Ghanbari\cite{t7a} derived some new solitary solutions by using the Exp-function method. More recently, Rezazadeh constructed the analytical solutions of Eq. (\ref{1}) by utilizing the functional variable method\cite{t7b}.
\par Although various concise and efficient methods have been put forward to obtain  so many profound results about traveling wave solutions of the Eq. (\ref{1}), there still exist some problems unsolved. Firstly, due to the limitations caused by both the ansatz equations and the assumption about solutions in these direct methods, some solutions of Eq. (\ref{1}) could be lost. In addition, we note that unbounded traveling wave solutions of Eq. (\ref{1}) have not been reported in previous work. So, in this paper, we will try to apply the bifurcation method of dynamical system\cite{t8,t8a} to  solve these problems. This method allows detailed analysis of phase space geometry of the traveling wave system of Eq. (\ref{1}). Through exploring various orbits of traveling wave system of Eq. (\ref{1}), we construct its traveling wave solutions uniformly,  including the bounded traveling wave solutions and unbounded ones.
\par Our paper is organized as follows: In  section \ref{sec2}, we derive two types of traveling wave systems of Eq. (\ref{1}), including a singular traveling wave system. Then, by studying bifurcation of traveling wave solutions, we try to give global phase portraits of the two types of traveling wave systems. In section \ref{sec3}, by calculating some complicated elliptic integrals along  various orbits, we construct bounded and unbounded traveling wave solutions of Eq. (\ref{1}) uniformly.
\section{Traveling wave systems and bifurcation analysis}\label{sec2}
\par In this section, inspired by previous work \cite{t4a,t4f}, we firstly derive two types of traveling wave systems of Eq. (\ref{1}). Then, we study bifurcation of the two traveling wave systems (\ref{8}) and (\ref{13}) by dynamical system method.
\subsection{Two types of traveling wave systems of the GKMN equation}
\par Firstly, we assume that the type I traveling wave solution has the form
\begin{equation}\label{6}
q(x,y,t)=p(\xi){\rm exp}(\psi(x,y,t)i),
\end{equation}
where real function $p(\xi)$ is the portion of the amplitude with $\xi=x+my-ct$ and real function $\psi(x,y,t)=\kappa x+\omega y-rt+\theta$ is the phase component. Parameters
$m$,  $c$, $r$ and $\theta$ are reals and represent the inverse width of the soliton in the direction $y$, wave velocity, wave number and phase constant respectively. Real parameters
$\kappa$ and $\omega$ denote the frequencies along the $x$ and $y$ directions respectively.
\par Substituting the solution (\ref{6}) into Eq. (\ref{1}), we obtain
\begin{equation}\label{7}
\left\{
\begin{array}{lll}
  $real part$:amp^{''}+(r-a\kappa\omega)p+2\kappa bp^{3}=0,\\
 $imaginary part$:(-c+am\kappa+a\omega)p^{'}=0,
 \end{array}
 \right.
\end{equation}
where $'$ denotes $ d/d\xi $. From the imaginary part in (\ref{7}), we have
$$
   c=am\kappa+a\omega.
$$
It means that system (\ref{7}) has the equivalent form
\begin{equation}\label{8}
\left\{
\begin{array}{lll}
 p^{'}=y,\\
 y^{'}=\displaystyle\frac{a\kappa\omega-r}{am}p-\displaystyle\frac{2\kappa b}{am}p^{3},
\end{array}
\right.
\end{equation}
with constraint $c=am\kappa+a\omega$. Obviously, system (\ref{8}) is a Hamiltonian system with the energy function
\begin{equation}\label{9}
  H(p,y)=\displaystyle\frac{1}{2}y^{2}-\displaystyle\frac{a\kappa\omega-r}{2am}p^{2}+\displaystyle\frac{\kappa b}{2am}p^{4}.
\end{equation}
\par Then, we consider the type II traveling wave solution
\begin{equation}\label{10}
  q(x,y,t)=\phi(\xi){\rm exp}((\varphi (\xi)-\mu t)i),~~\xi=x+my-ct,
\end{equation}
 where $\phi(\xi)$ is the amplitude function, $\varphi(\xi)$  is the phase function and real parameters $m$, $c$ and $\mu$ represent the inverse width of the soliton in the direction $y$, wave velocity and wave frequency respectively. Here, we suppose that the amplitude function $\phi(\xi)\neq0$, otherwise the solution $q(x,y,t)$ of Eq. (\ref{1}) degenerates to the trivial solution. Substituting (\ref{10}) into (\ref{1}) and separating the real and imaginary parts, we have
\begin{equation}\label{11}
\left\{
\begin{array}{lll}
  $real part$: am\phi^{''}+n\phi\varphi^{'}+c\phi-am\phi(\varphi^{'})^{2}+2b\phi^{3}\varphi^{'}=0,\\
  $imaginary part$: -c\phi^{'}+2am\phi^{'}\varphi^{'}+am\phi\varphi^{''}=0,
 \end{array}
 \right.
\end{equation}
where $'$ denotes $d/d\xi$. From the second equation in (\ref{11}), we can solve
\begin{equation}\label{12}
   \varphi^{'}=\displaystyle\frac{e}{am\phi^{2}}+\displaystyle\frac{c}{2am},
\end{equation}
where $e$ is the integral constant. Plugging (\ref{12}) into the real part in (\ref{11}), we obtain
\begin{equation}\label{13}
\left\{
\begin{array}{lll}
 \phi^{'}=y,\\
 y^{'}=\displaystyle\frac{\alpha_{1}\phi^{6}+\alpha_{2}\phi^{4}+\alpha_{3}}{\phi^{3}},
\end{array}
\right.
\end{equation}
where $\alpha_{1}=-\displaystyle\frac{cb}{a^{2}m^{2}}$, $\alpha_{2}=-\displaystyle\frac{\mu}{am}-\displaystyle\frac{c^{2}+8be}{4}$ and $\alpha_{3}=\displaystyle\frac{e^{2}}{a^{2}m^{2}}\neq0$. If $\alpha_{3}=0$, system (\ref{13}) degenerates to system (\ref{8}).  System (\ref{13}) is a singular traveling wave system. With the transformation $d\xi=\phi^{3}d\zeta$, it can be converted to the associated regular system
\begin{equation}\label{14}
\left\{
\begin{array}{lll}
 \phi^{'}=\phi^{3}y,\\
 y^{'}=\alpha_{1}\phi^{6}+\alpha_{2}\phi^{4}+\alpha_{3},
\end{array}
\right.
\end{equation}
where $'$ stands for $d/d\zeta$, which has the energy function
\begin{equation}\label{15}
  H(\phi,y)=\displaystyle\frac{1}{2}y^{2}-\displaystyle\frac{\alpha_{1}}{4}\phi^{4}-\displaystyle\frac{\alpha_{2}}{2}\phi^{2}+\displaystyle\frac{\alpha_{3}}{2}\frac{1}{\phi^{2}}.
\end{equation}
When $\phi\neq0$, systems (\ref{13}) has the same vector field as  system (\ref{14}). So,  the function (\ref{15}) is also the energy function of system (\ref{13}).
\subsection{Bifurcation analysis}
\par Firstly, we study the distribution and properties of equilibria of system (\ref{8}) and (\ref{14}).
\begin{thm}\label{th1}
      When $\kappa b(a\kappa \omega-r)>0$, system (\ref{8}) has three equilibria $P_{1}(-\sqrt{\frac{a\kappa \omega-r}{2\kappa b}},0)$, $P_{2}(0,0)$ and $P_{3}(\sqrt{\frac{a\kappa \omega-r}{2\kappa b}},0)$. If $\displaystyle\frac{\kappa b}{am}<0(>0)$, $P_{1}$ and $P_{3}$ are saddles(centers), while $P_{2}$ is a center(saddle). When $\kappa b(a\kappa \omega-r)<0$, system (\ref{8}) has only one simple equilibrium $P_{2}(0,0)$. If $\displaystyle\frac{\kappa b}{am}<0(>0)$, $P_{2}$ is a saddle(center). When $a\kappa \omega-r=0$, system (\ref{8}) has a unique degenerate equilibrium $P_{2}(0,0)$. If $\displaystyle\frac{\kappa b}{am}<0(>0)$, $P_{2}$ is still a saddle(center).\\
\end{thm}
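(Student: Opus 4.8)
The plan is to locate the equilibria algebraically and then classify them by linearization, handling the degenerate threshold by a separate nonlinear argument. First I would set the right-hand side of system (\ref{8}) to zero: the first equation forces $y=0$, and the second becomes $\frac{a\kappa\omega-r}{am}p-\frac{2\kappa b}{am}p^{3}=0$, i.e. $\frac{p}{am}\bigl[(a\kappa\omega-r)-2\kappa b\,p^{2}\bigr]=0$. The root $p=0$ always yields $P_{2}(0,0)$, while the bracket contributes the two further roots $p=\pm\sqrt{(a\kappa\omega-r)/(2\kappa b)}$ precisely when $(a\kappa\omega-r)/(2\kappa b)>0$, that is when $\kappa b(a\kappa\omega-r)>0$; these are $P_{1}$ and $P_{3}$. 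When $\kappa b(a\kappa\omega-r)<0$ the bracket has no real root and only $P_{2}$ survives, and when $a\kappa\omega-r=0$ the three roots coalesce, leaving a single degenerate equilibrium $P_{2}$.

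Next I would classify each equilibrium through the Jacobian $J=\begin{pmatrix}0 & 1\\[2pt] \frac{a\kappa\omega-r}{am}-\frac{6\kappa b}{am}p^{2} & 0\end{pmatrix}$, whose trace vanishes identically because (\ref{8}) is Hamiltonian with energy (\ref{9}). Consequently each nondegenerate equilibrium is a center when $\det J>0$ (the eigenvalues are purely imaginary, and the conserved $H$ forces closed level curves, so it is a genuine center) and a saddle when $\det J<0$. Evaluating gives $\det J|_{P_{2}}=-\frac{a\kappa\omega-r}{am}$ and, substituting $p^{2}=(a\kappa\omega-r)/(2\kappa b)$, $\det J|_{P_{1},P_{3}}=\frac{2(a\kappa\omega-r)}{am}$. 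It then remains to read off signs. When $\kappa b(a\kappa\omega-r)>0$ the numerators $a\kappa\omega-r$ and $\kappa b$ share a sign, so $\frac{a\kappa\omega-r}{am}$ has the sign of $\frac{\kappa b}{am}$; hence $\frac{\kappa b}{am}>0$ makes $P_{1},P_{3}$ centers and $P_{2}$ a saddle, and $\frac{\kappa b}{am}<0$ reverses both. When $\kappa b(a\kappa\omega-r)<0$ the same formula $\det J|_{P_{2}}=-\frac{a\kappa\omega-r}{am}$ at the lone equilibrium yields a center for $\frac{\kappa b}{am}>0$ and a saddle for $\frac{\kappa b}{am}<0$.

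The main obstacle is the degenerate case $a\kappa\omega-r=0$, where $\det J|_{P_{2}}=0$, $J$ is nilpotent, and linearization is inconclusive. Here I would argue directly from the energy function, which reduces to $H(p,y)=\frac{1}{2}y^{2}+\frac{\kappa b}{2am}p^{4}$. If $\frac{\kappa b}{am}>0$ this $H$ is positive definite with a strict minimum at the origin, so the nearby level sets $H=\text{const}>0$ are closed curves encircling $P_{2}$ and it is a center; if $\frac{\kappa b}{am}<0$ the set $H=0$ is the pair of parabolas $y=\pm\sqrt{-\kappa b/(am)}\,p^{2}$ tangent at the origin, which split a neighbourhood into four hyperbolic sectors, so $P_{2}$ is a saddle. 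Equivalently, one may invoke the standard classification of an isolated nilpotent equilibrium of the form $\dot p=y,\ \dot y=a_{m}p^{m}+\cdots$ with $m=3$ odd and $a_{m}=-\frac{2\kappa b}{am}$, which gives a center for $a_{m}<0$ and a saddle for $a_{m}>0$, in agreement with the energy argument.
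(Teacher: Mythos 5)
Your proposal is correct and follows essentially the same route as the paper: locate the equilibria from the cubic right-hand side, classify the nondegenerate ones via the (traceless, Hamiltonian) Jacobian with the sign of $\det J$, and treat the nilpotent case $a\kappa\omega-r=0$ by the normal form $p'=y$, $y'=-\tfrac{2\kappa b}{am}p^{3}$ with $k=3$ odd, exactly as the paper does by citing the standard classification theorem. Your additional self-contained level-set argument from $H(p,y)=\tfrac{1}{2}y^{2}+\tfrac{\kappa b}{2am}p^{4}$ in the degenerate case is a nice touch but does not change the substance.
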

\begin{proof}
 When $\kappa b(a\kappa \omega-r)\neq0$, by the theory of dynamical system and the properties of Hamiltonian system\cite{t8,t8a,t8aa}, it is easy to verify the corresponding
 results above.
\par Especially, when $a\kappa \omega-r=0$, we see that system (\ref{8}) has a unique equilibrium $P_{2}(0,0)$ with the degenerate Jacobian matrix
$$
M(P_{2})= \left[
 \begin{matrix}
   0 &\qquad  1 \\
   0 &\qquad  0 \\
  \end{matrix}
  \right].\\
 $$
In this case, system (\ref{8}) has the associated normal form
\begin{equation*}
\left\{
\begin{array}{lll}
 p^{'}=y,\\
 y^{'}=a_{k}p^{k}[1+f_{1}(p)]+b_{n}p^{n}y[1+f_{2}(p)]+y^{2}f_{3}(p,y)=-\displaystyle\frac{2\kappa b}{am}p^{3},
\end{array}
\right.
\end{equation*}
where $k=3$, $a_{k}=\displaystyle-\frac{2\kappa b}{am}$, $b_{n}=0$, $f_{1}(p)=0$, $f_{2}(p)=0$ and $f_{3}(p,y)=0$. From the fact that $k$ is an odd number, the degenerate equilibrium $P_{2}$ is a saddle when $a_{k}>0$, whereas $P_{2}$ is a center when $a_{k}<0$ and $b_{n}=0$ according to the qualitative theory of differential equation\cite[Theorem 7.2, Chapter 2]{t8aa}.
\end{proof}
Furthermore, we note that when $\kappa b(a\kappa \omega-r)>0$, the energy of three equilibria has the following relationship
$$
\begin{aligned}
h(P_{2})&=0,\\
h(P_{1})&\equiv h(P_{3})=\displaystyle-\frac{(a\kappa \omega-r)^2}{8ab\kappa m},\\
\end{aligned}
$$
which means that the energy of saddles $P_{1}$ and $P_{3}$ is always equivalent. So, according to the properities of Hamilton system \cite{t8}, we have the following
results:
\par\noindent{\bf Case 1.} When $\displaystyle \kappa b(a\kappa \omega-r)>0$ and $\displaystyle\frac{\kappa b}{am}<0$, there exist two heteroclinic orbits $\Gamma^{0}$ and
$\Gamma_{0}$ connecting the saddles $P_{1}$ and $P_{3}$. Center $P_{2}$ is surrounded by a family of periodic orbits
\begin{equation*}
\gamma(h)=\{H(p,y)=h,h\in\big(0,\displaystyle-\frac{(a\kappa \omega-r)^2}{8ab\kappa m}\big)\}.
\end{equation*}
$\gamma(h)$ tends to $P_{2}$ as h$\rightarrow$$0$, and tends to $\Gamma^{0}$ and $\Gamma_{0}$ as h$\rightarrow$$\displaystyle-\frac{(a\kappa \omega-r)^2}{8ab\kappa m}$. Except the periodic orbit and the heteroclinic orbits, other orbits of system (\ref{8}) are unbounded. Please see Fig. \ref{fig1a}.
\par\noindent {\bf Case 2.}  When $\displaystyle\kappa b(a\kappa \omega-r)>0$ and $\displaystyle\frac{\kappa b}{am}>0$, all orbits are bounded. There exist two homoclinic orbits $\Upsilon_{1}$ and
$\Upsilon_{2}$ connecting the saddle $P_{2}$. Centers $P_{1}$ and $P_{3}$ are surrounded by two families of periodic orbits
$$
\gamma_{_R}(h)=\{H(p,y)=h,h\in\big(\displaystyle-\frac{(a\kappa \omega-r)^2}{8ab\kappa m},0\big)\},
$$
$$
\gamma_{_L}(h)=\{H(p,y)=h,h\in\big(\displaystyle-\frac{(a\kappa \omega-r)^2}{8ab\kappa m},0\big)\}.
$$
$\gamma_{_R}(h)$ and $\gamma_{_L}(h)$ tend to $P_{1}$ and $P_{3}$ respectively as h$\rightarrow$$\displaystyle-\frac{(a\kappa \omega-r)^2}{8ab\kappa m}$, and tend to
$\Upsilon_{1}$ and $\Upsilon_{2}$ respectively as h$\rightarrow$$0$ shown in Fig. \ref{fig1b}.
\par\noindent {\bf Case 3.} When $\displaystyle\kappa b(a\kappa \omega-r)<0$ or $a\kappa \omega-r=0$, system (\ref{8}) has only one equilibrium. If $\displaystyle\frac{\kappa b}{am}<0(>0)$, all orbits of system (\ref{8}) are unbounded(bounded) shown in Figs. \ref{fig2} and \ref{fig3}.\\
\begin{figure*}[!h]
\centering
\centering
\subfigure[$\displaystyle\frac{\kappa b}{am}=-\frac{1}{2}$, $\displaystyle \frac{a\kappa \omega-r}{am}=-4$.]{\label{fig1a}\includegraphics[height=5.8cm,width=5.8cm]{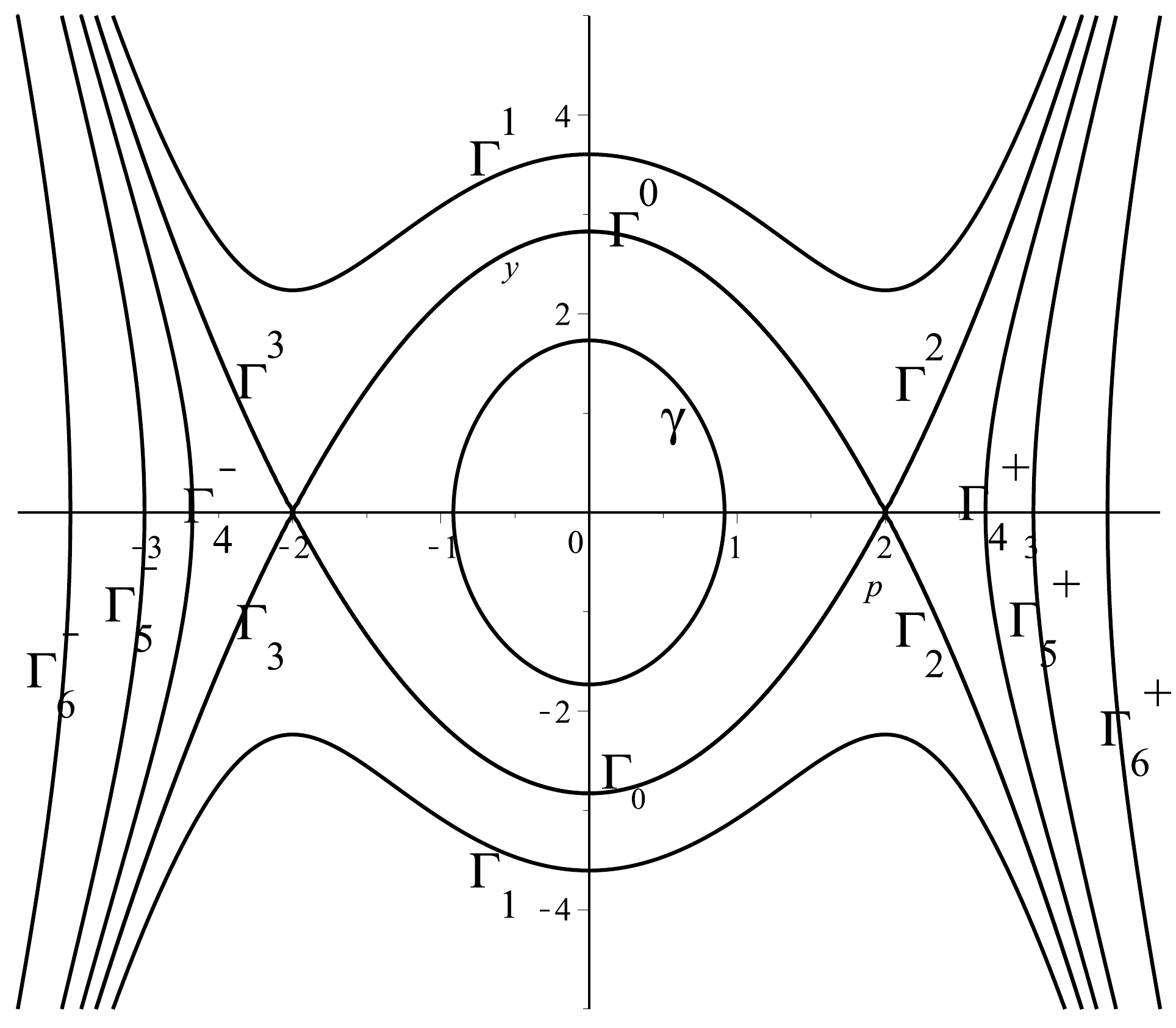}}
\subfigure[$\displaystyle\frac{\kappa b}{am}=\frac{1}{2}$, $\displaystyle\frac{a\kappa \omega-r}{am}=4$.]{\label{fig1b}\includegraphics[height=5.8cm,width=5.8cm]{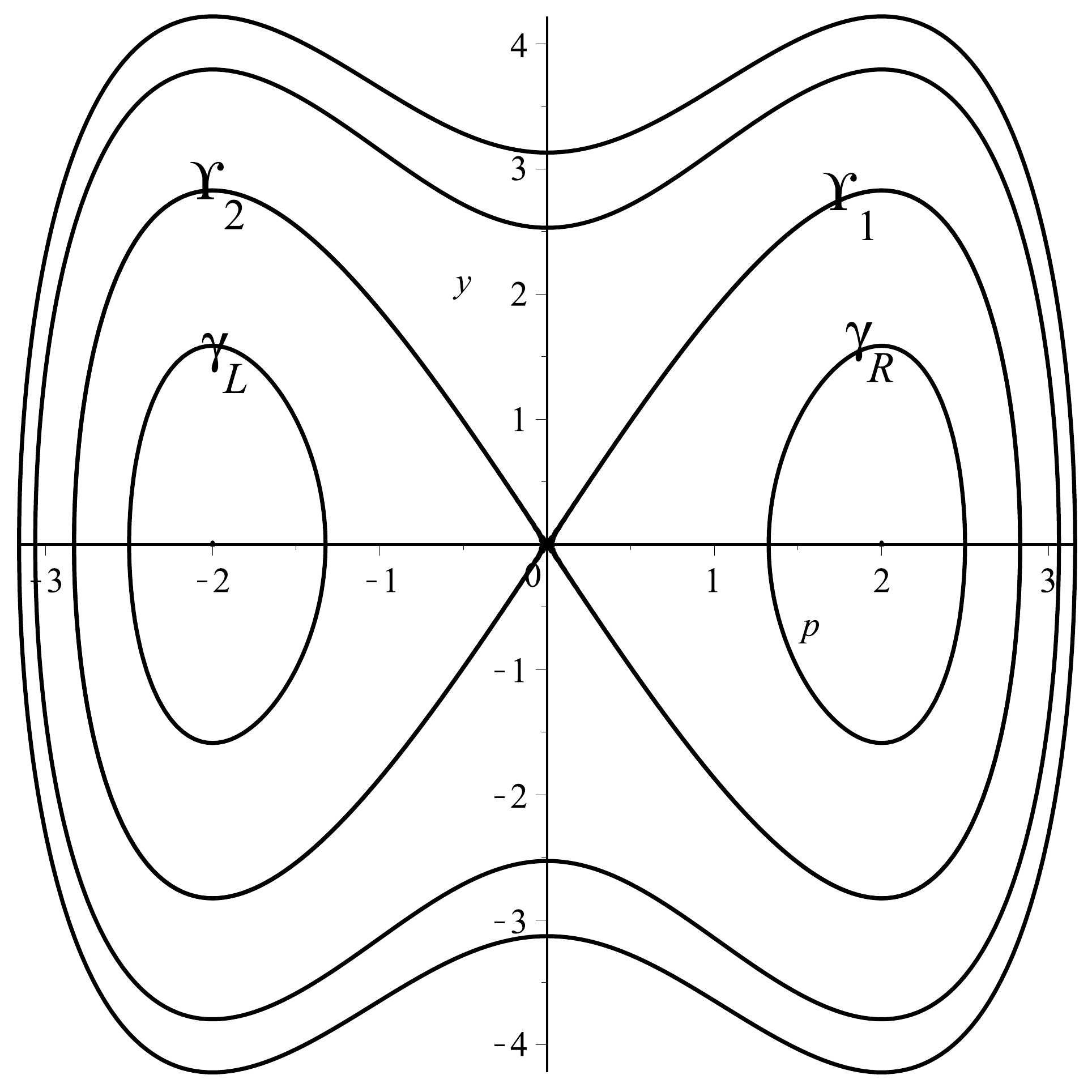}}
\caption{Phase portraits of system (\ref{8}) for $\kappa b(a\kappa \omega-r)>0$.}
\label{fig1}
\end{figure*}
\begin{figure*}[!h]
\centering
\subfigure[$\displaystyle \frac{\kappa b}{am}=-2$, $\displaystyle \frac{a\kappa \omega-r}{am}=4$.]{\label{fig2a}\includegraphics[height=5.8cm,width=5.8cm]{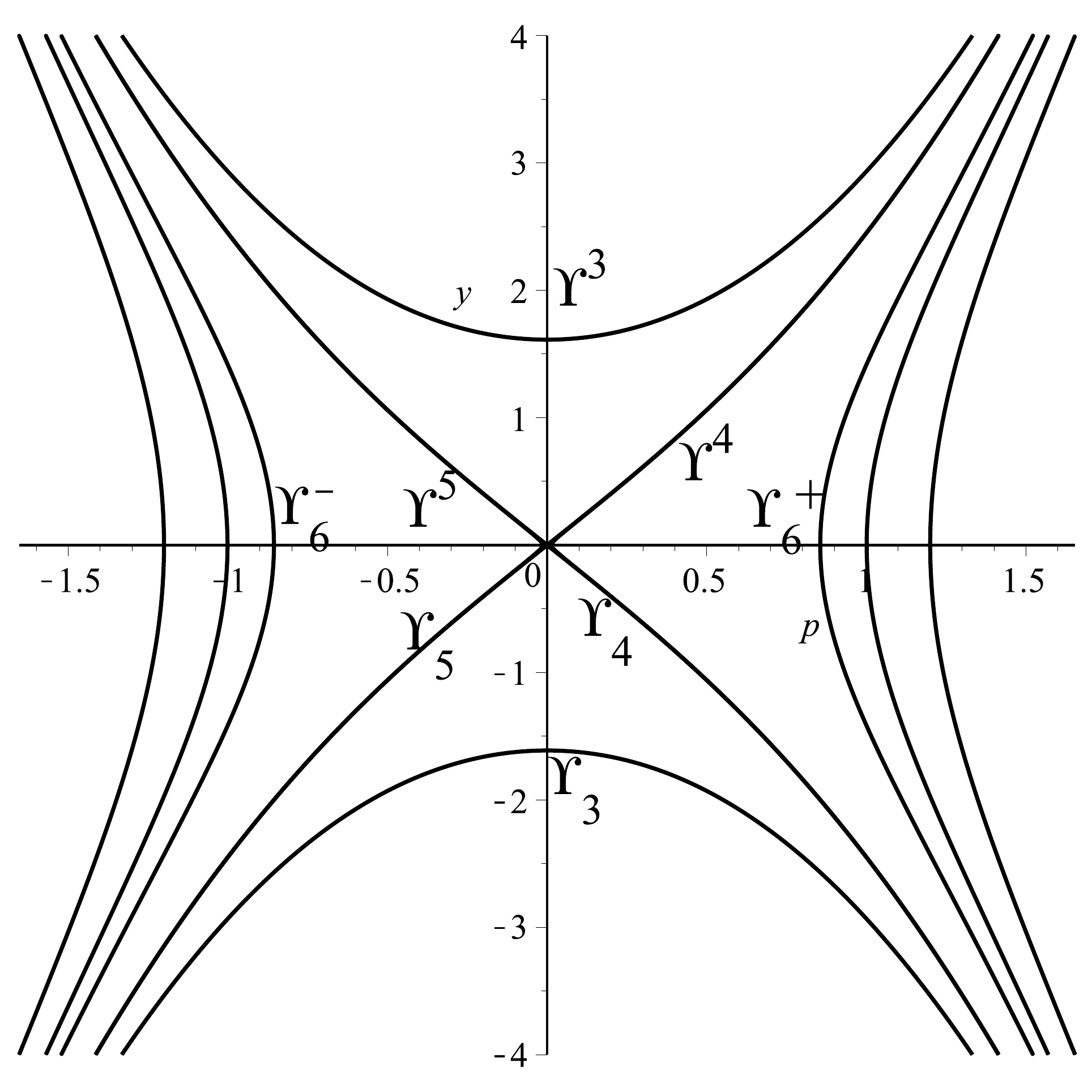}}
\subfigure[$\displaystyle \frac{\kappa b}{am}=2$, $\displaystyle \frac{a\kappa \omega-r}{am}=-4$.]{\label{fig2b}\includegraphics[height=5.8cm,width=5.8cm]{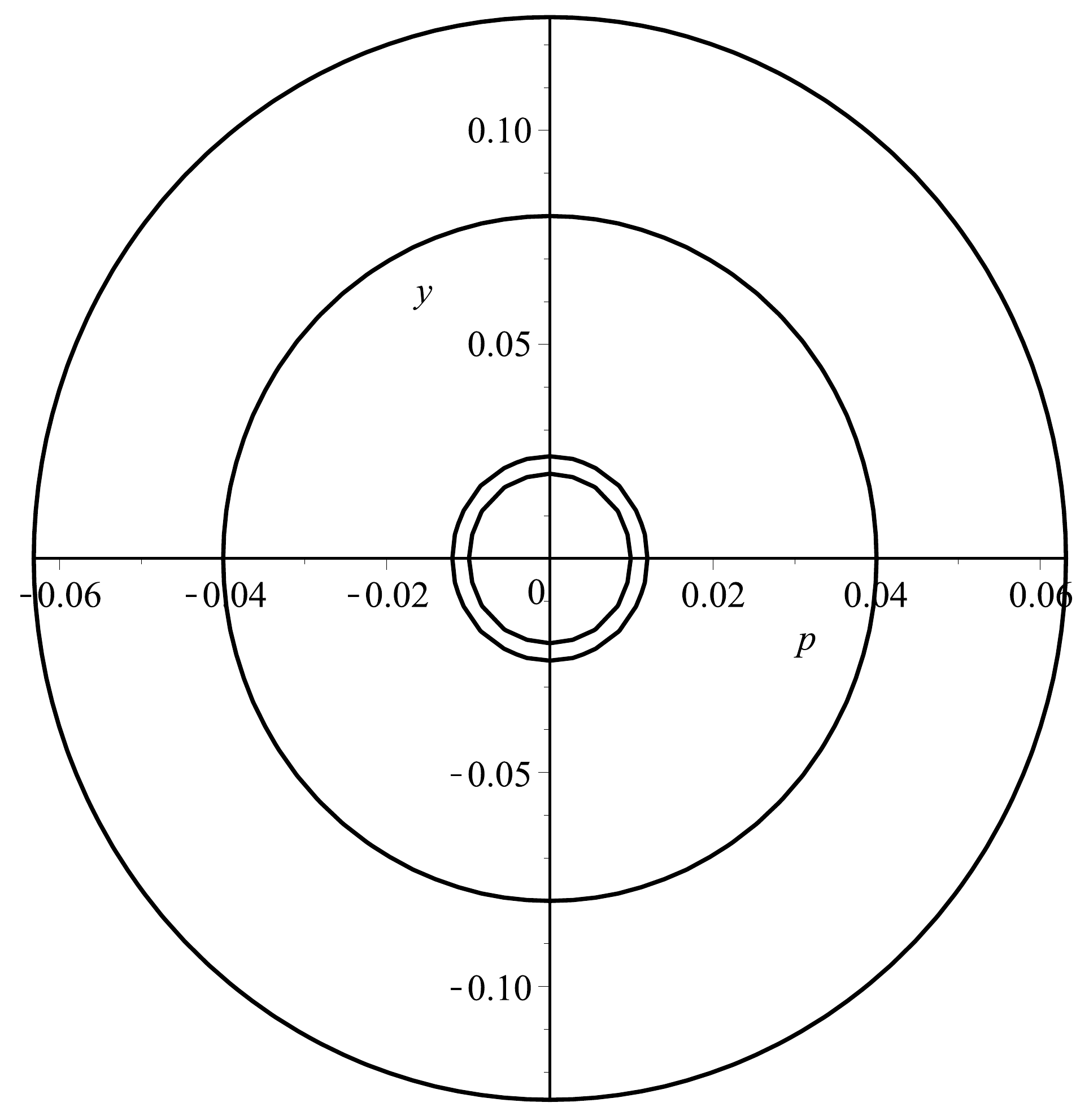}}
\caption{Phase portraits of system (\ref{8}) for $\kappa b(a\kappa \omega-r)<0$.}
\label{fig2}
\end{figure*}
\begin{figure*}[!h]
\centering
\subfigure[$\displaystyle \frac{\kappa b}{am}=-\frac{1}{2}$, $a\kappa \omega-r=0$.]{\label{fig3a}\includegraphics[height=5.8cm,width=5.8cm]{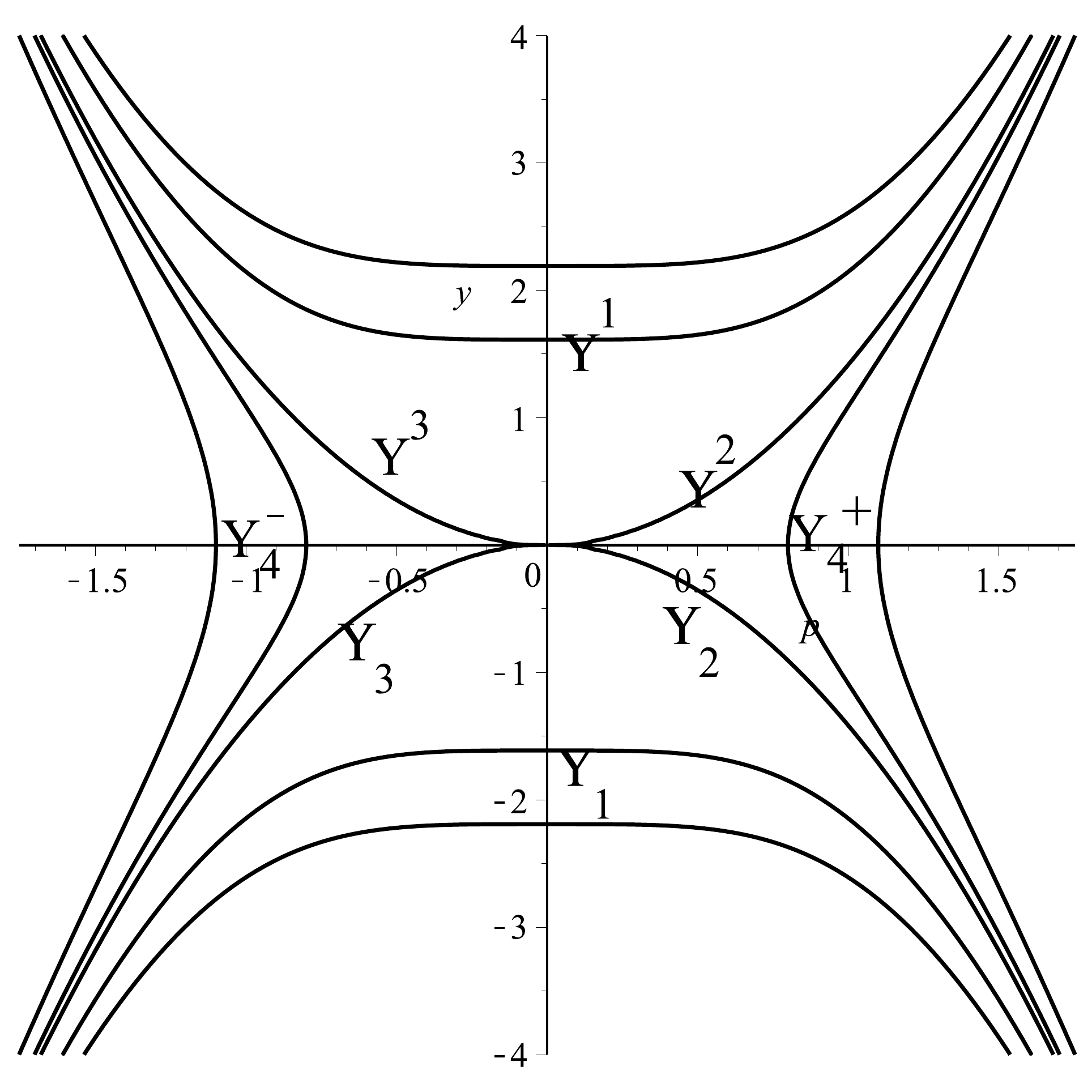}}
\subfigure[$\displaystyle \frac{\kappa b}{am}=\frac{1}{2}$, $a\kappa \omega-r=0$.]{\label{fig3b}\includegraphics[height=5.8cm,width=5.8cm]{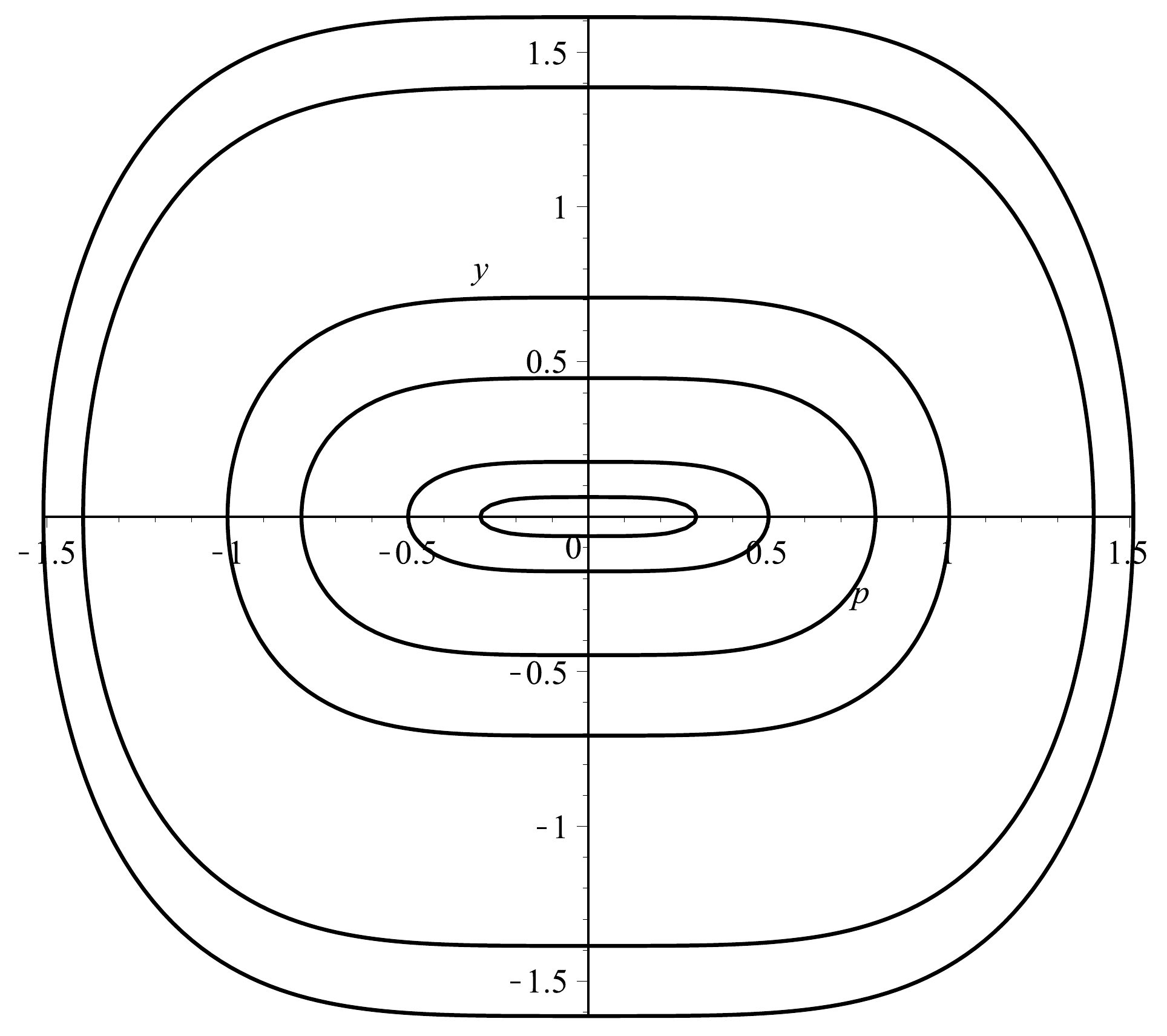}}
\caption{Phase portraits of system (\ref{8}) for $a\kappa \omega-r=0$.}
\label{fig3}
\end{figure*}
\begin{thm}\label{th2}
      When $\alpha_{1}>0$, $\alpha_{2}<0$ and $0<\alpha_{3}<\displaystyle-\frac{4\alpha_{2}^{3}}{27\alpha_{1}^{2}}$, system (\ref{14}) has two pairs of equilibria
      $\widetilde{P}_{1,2}(\pm u_{1},0)$ and $\widetilde{P}_{3,4}(\pm u_{2},0)$, where $\widetilde{P}_{1,2}$ are centers and $\widetilde{P}_{3,4}$ are saddles.
      When $\alpha_{1}>0$, $\alpha_{2}<0$ and $\alpha_{3}=\displaystyle-\frac{4\alpha_{2}^{3}}{27\alpha_{1}^{2}}$, system (\ref{14}) has a pair of cusps
     $\widetilde{P}_{5,6}(\pm u_{3},0)$.
      When either $\alpha_{1}>0$, $\alpha_{2} \geq 0$ or $\alpha_{1}>0$, $\alpha_{2}<0$, $\alpha_{3}>\displaystyle-\frac{4\alpha_{2}^{3}}{27\alpha_{1}^{2}}$, system (\ref{14}) has no equilibrium.
      When $\alpha_{1}<0$, system (\ref{14}) has a pair of centers $\widetilde{P}_{7,8}(\pm u_{4},0)$.
     \end{thm}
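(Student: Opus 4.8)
The plan is to reduce the location of equilibria of system (\ref{14}) to counting the positive roots of a single cubic, and then to read off the type of each equilibrium from the sign of the Hessian of the Hamiltonian. First I would observe that since $\alpha_3\neq0$, the equation $\phi'=\phi^3y$ forces $y=0$ at every equilibrium: the alternative $\phi=0$ is impossible because then $y'=\alpha_3\neq0$. With $y=0$ the second equation becomes $\alpha_1\phi^6+\alpha_2\phi^4+\alpha_3=0$, and putting $s=\phi^2>0$ turns this into the cubic $g(s):=\alpha_1 s^3+\alpha_2 s^2+\alpha_3=0$. Each simple positive root $s_*$ of $g$ then produces exactly the pair of equilibria $(\pm\sqrt{s_*},0)$, so the whole theorem reduces to counting positive roots of $g$ in each parameter regime.

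Next I would compute the Jacobian of (\ref{14}) at an equilibrium $(\phi_0,0)$. Its trace is $0$ and its determinant is $\det M=-\phi_0^{6}(6\alpha_1\phi_0^{2}+4\alpha_2)$. Since $g'(s)=s(3\alpha_1 s+2\alpha_2)$, for $s>0$ the factor $6\alpha_1 s+4\alpha_2=2g'(s)/s$ has the same sign as $g'(s)$. Because system (\ref{14}) is Hamiltonian, a trace-free equilibrium is a center when $\det M>0$ and a saddle when $\det M<0$ (the same fact used in Theorem \ref{th1}). Hence a simple positive root of $g$ yields a center where $g$ is decreasing ($g'<0$) and a saddle where $g$ is increasing ($g'>0$). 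This single criterion dispatches every nondegenerate case.

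Then I would analyse $g$ via its critical points $s=0$ and $u^{*}=-2\alpha_2/(3\alpha_1)$, using the value $g(u^{*})=\alpha_3+\tfrac{4\alpha_2^{3}}{27\alpha_1^{2}}$. For $\alpha_1>0,\ \alpha_2<0$ the point $u^{*}>0$ is a local minimum of $g$ on $(0,\infty)$ while $g(0)=\alpha_3>0$; comparing $\alpha_3$ with $-4\alpha_2^{3}/(27\alpha_1^{2})$, i.e. the sign of $g(u^{*})$, decides whether $g$ has two positive roots straddling $u^{*}$ (a center at the inner root and a saddle at the outer, giving $\widetilde P_{1,2}$ and $\widetilde P_{3,4}$), one double root, or none. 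For $\alpha_1>0,\ \alpha_2\ge0$ one has $g(s)\ge\alpha_3>0$ on $[0,\infty)$, so there is no equilibrium. For $\alpha_1<0$ I would treat the sub-cases $\alpha_2\le0$ and $\alpha_2>0$: in both, $g(0)=\alpha_3>0$ while $g\to-\infty$, and a short monotonicity argument on $(0,\infty)$ shows $g$ has exactly one positive root at which it is decreasing, so by the criterion above it is a center $\widetilde P_{7,8}$ — independent of $\alpha_2$.

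The main obstacle is the borderline case $\alpha_3=-4\alpha_2^{3}/(27\alpha_1^{2})$, where $u^{*}$ is a double root of $g$, the determinant vanishes, and the linearization is inconclusive. Here I would pass to the local normal form exactly as in the proof of Theorem \ref{th1}. Expanding $g$ about the double root gives $g(u^{*})=g'(u^{*})=0$ and $g''(u^{*})=-2\alpha_2>0$, so after rescaling time and setting $Y=\phi_0^{3}y$ the system reads $\phi'=Y$, $Y'=a_2\phi^{2}[1+\cdots]$ with $a_2>0$ and no $\phi Y$ term. Since the leading exponent $k=2$ is even, the qualitative theory of degenerate equilibria \cite[Theorem 7.2, Chapter 2]{t8aa} identifies each of $\widetilde P_{5,6}$ as a cusp, completing the last case.
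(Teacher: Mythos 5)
Your proposal is correct and follows essentially the same route as the paper: the nondegenerate equilibria are located and classified exactly as the paper's appeal to ``similar to Theorem \ref{th1}'' intends (reduction to the cubic $\alpha_1 s^3+\alpha_2 s^2+\alpha_3$ in $s=\phi^2$ plus the trace-zero/determinant-sign criterion for the Hamiltonian-type system), and the borderline case is settled, as in the paper, by passing to the normal form with even leading exponent $k=2$ and $b_n=0$ to identify $\widetilde P_{5,6}$ as cusps. The only nit is the citation: the even-$k$ cusp case is \cite[Theorem 7.3, Chapter 2]{t8aa}, not Theorem 7.2 (which is the odd-$k$ result used in Theorem \ref{th1}).
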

\begin{proof}
 Similar to the proof of Theorem {\ref{th1}}, we only give the detail proof for degenerate equilibria for simplicity. When $\alpha_{1}>0$, $\alpha_{2}<0$ and $\alpha_{3}=\displaystyle -\frac{4\alpha_{2}^{3}}{27\alpha_{1}^{2}}$, a direct computation shows that system (\ref{14}) has a pair of degenerate equilibria $\widetilde{P}_{5,6}(\pm{u_{3}},0)$ with the degenerate Jacobian matrix
 $$
M(\widetilde{P}_{5,6})= \left[
 \begin{matrix}
   0 &\qquad  \pm u_{3}^{3} \\
   0 &\qquad  0 \\
  \end{matrix}
  \right].\\
 $$
In order to judge the type of the equilibrium $\widetilde{P}_{5}({u_{3}},0)$, we make the homeomorphic transformation
\begin{equation}\label{16}
u=\phi-u_{3},v=y+\displaystyle\frac{1}{u^{3}_{3}}(3u^{2}_{3}\alpha y+3u_{3}{\alpha}^{2}y+\alpha^{3}y),
\end{equation}
which transforms system (\ref{14}) to the normal form
\begin{equation*}
\left\{
\begin{array}{lll}
 u^{'}=v,\\
 v^{'}=a_{k}u^{k}[1+g_{1}(u)]+b_{n}u^{n}v[1+g_{2}(u)]+v^{2}g_{3}(u,v),
\end{array}
\right.
\end{equation*}
where $k=2$, $M=\displaystyle\frac{\alpha_{2}+3u_{3}^{2}\alpha_{1}}{\alpha_{1}}$, $a_{k}=\displaystyle\frac{4\alpha_{1}M}{u_{3}}$,
$g_{1}(u)=\displaystyle\frac{1}{4u_{3}^{5}M}((8u_{3}^{5}M+16u_{3}^{4}M)u+(12u_{3}^{5}+24u_{3}^{4}+25u_{3}^{3}M)u^{2}
+(42u_{3}^{4}+24u_{3}^{3}+19u_{3}^{2}M)u^{3}+(54u_{3}^{3}+9u_{3}^{2}+7u_{3}M)u^{4}+(33u_{3}^{2}+M)u^{5}+9u_{3}u^{6}+u^{7})$, $b_{n}=0$ and
$g_{3}(u,v)=\displaystyle\frac{3u_{3}^{2}+6u_{3}u+3u^{2}}{u_{3}^{3}+3u_{3}^{2}u+3u_{3}u^{2}+u^{3}}$. From the fact that $k=2$ is an even number and $b_{n}=0$, we come to the conclusion that equilibrium $\widetilde{P}_{5}({u_{3}},0)$ is a cusp according to the qualitative theory of differential equation \cite[Theorem 7.3, Chapter 2]{t8aa}.
\par Similarly, applying another homeomorphic transformation
\begin{equation}\label{17}
u=\phi+u_{3},v=y-\displaystyle\frac{1}{u^{3}_{3}}(3u^{2}_{3}\alpha y-3u_{3}{\alpha}^{2}y+\alpha^{3}y),
 \end{equation}
to system (\ref{14}), one can check equilibrium $\widetilde{P}_{6}({-u_{3}},0)$ is also a cusp.
\end{proof}
\par  Based on the properties of the system (\ref{14}), the bifurcation results of system (\ref{13}) are given as follows.
\par\noindent {\bf Case I.} When $\alpha_{1}>0$, $\alpha_{2}<0$ and $0<\alpha_{3}<\displaystyle-\frac{4\alpha_{2}^{3}}{27\alpha_{1}^{2}}$, there exist two homoclinic orbits
$\Pi^{^{+}}_{0}$ and $\Pi_{0}^{^{-}}$ connecting saddles $\widetilde{P}_{3}$ and $\widetilde{P}_{4}$ respectively. Centers $\widetilde{P}_{1}$ and $\widetilde{P}_{2}$ are surrounded by two families of periodic orbits, respectively
$$
\begin{aligned}
&\gamma^{^{+}}_{_1}(h)=\{H(\phi,y)=h,h\in\big(h(u_{1},0),h(u_{2},0)\big)\},\\
&\gamma^{^-}_{_1}(h)=\{H(\phi,y)=h,h\in\big(h(-u_{1},0),h(-u_{2},0)\big)\}.\\
\end{aligned}
$$
$\gamma^{^{+}}_{_1}(h)$ and $\gamma^{^-}_{_1}(h)$ respectively tend to $\widetilde{P}_{1,2}(\pm u_{1},0)$ as h$\rightarrow$$h(\pm u_{1},0)$, and tend to $\Pi^{^{+}}_{0}$ and $\Pi_{0}^{^{-}}$ as
h$\rightarrow$$h(\pm u_{2},0)$. All orbits are unbounded except for the periodic orbits and the homoclinic orbits shown in Fig. \ref{fig4a}.
\par\noindent {\bf Case II.}  When $\alpha_{1}>0$, $\alpha_{2}<0$ and $\alpha_{3}=\displaystyle-\frac{4\alpha_{2}^{3}}{27\alpha_{1}^{2}}$, all orbits of system (\ref{13}) are
unbounded. In the positive $\phi$-axis, the orbit $\Omega^{1}$ is different from orbit $\Omega_{1}$. More precisely, the $\omega$-limit set of $\Omega^{1}$ and the $\alpha$-limit
set of $\Omega_{1}$ correspond to the same equilibrium $\widetilde{P}_{5}(u_{3},0)$  shown in Fig. \ref{fig4b}.
\par\noindent {\bf Case III.} When $\alpha_{1}>0$, $\alpha_{2}\geq0$ or $\alpha_{1}>0$, $\alpha_{2}<0$ and $\alpha_{3}>\displaystyle-\frac{4\alpha_{2}^{3}}{27\alpha_{1}^{2}}$, there only exist unbounded orbits of system (\ref{13}) shown in Fig. \ref{fig4c}.
\par\noindent {\bf Case IV.} When $\alpha_{1}<0$, there exist two families of periodic orbits shown in Fig. \ref{fig4d}.
\begin{figure*}[!h]
\centering
\subfigure[$\alpha_{1}=1$,$\alpha_{2}=-4$,$\alpha_{3}=0.1$.]{\label{fig4a}\includegraphics[height=5.8cm,width=5.8cm]{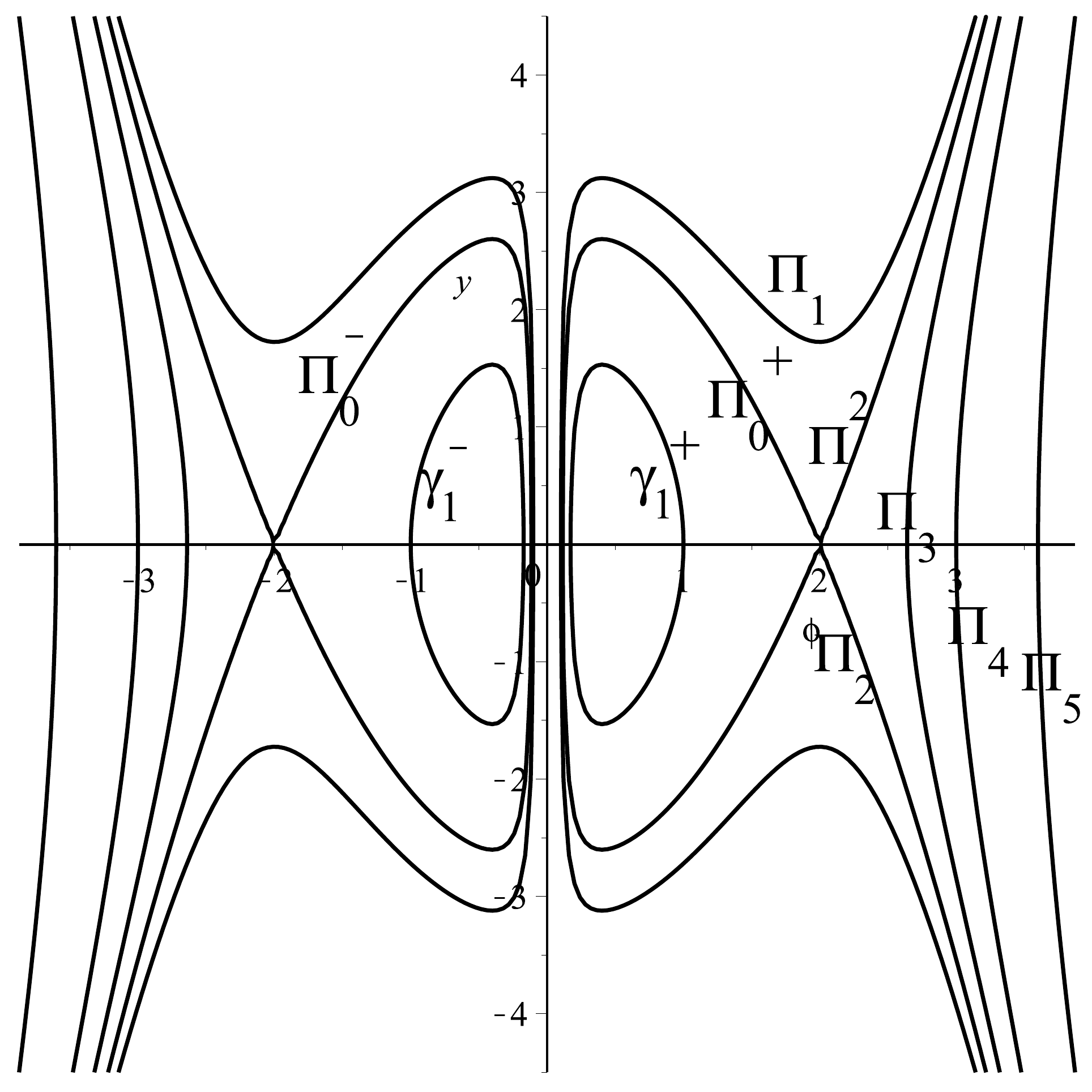}}
\subfigure[$\alpha_{1}=1$,$\alpha_{2}=-4$,$\alpha_{3}=\frac{256}{27}$.]{\label{fig4b}\includegraphics[height=5.8cm,width=5.8cm]{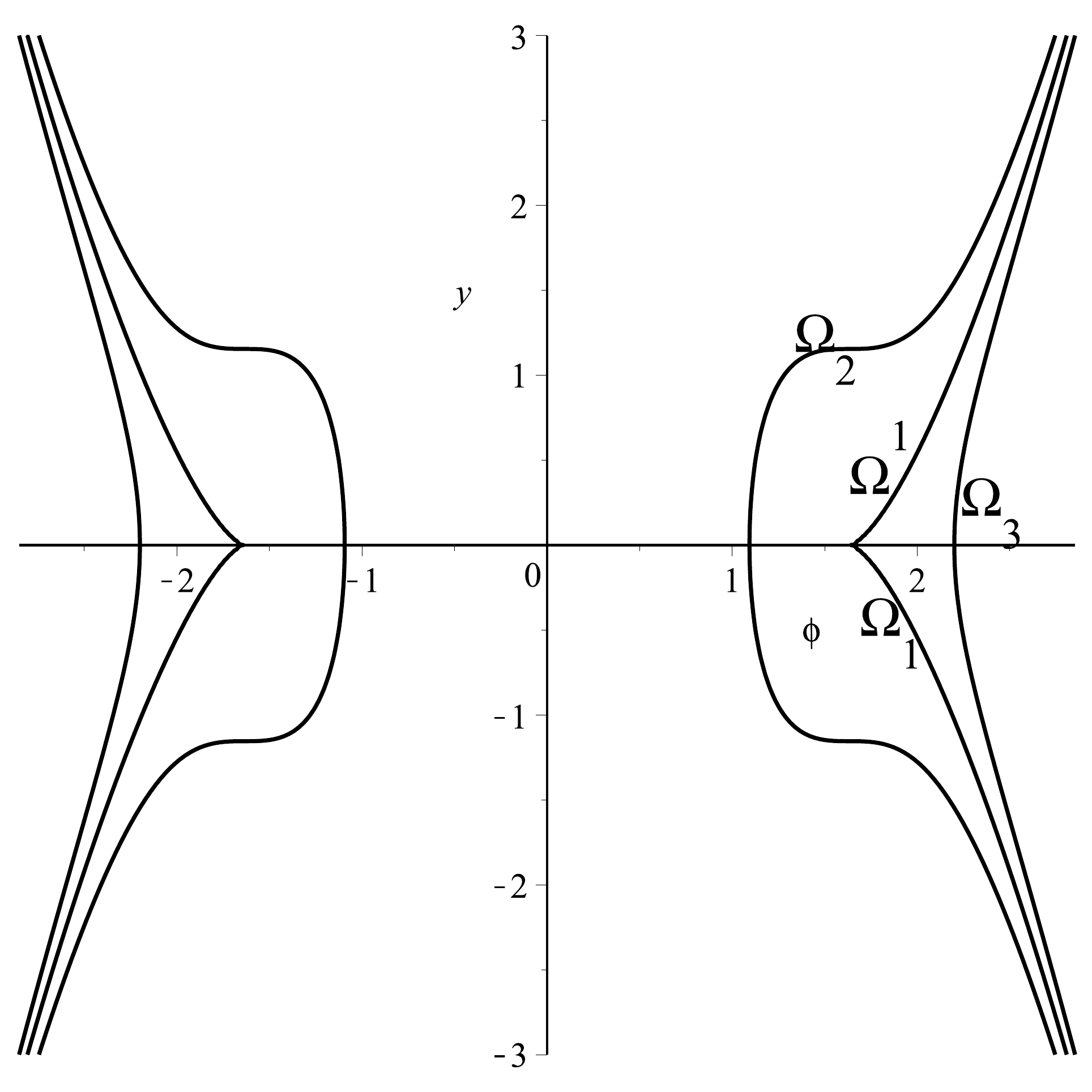}}
\subfigure[$\alpha_{1}=1$,$\alpha_{2}=0$,$\alpha_{3}=0.1$.]{\label{fig4c}\includegraphics[height=5.8cm,width=5.8cm]{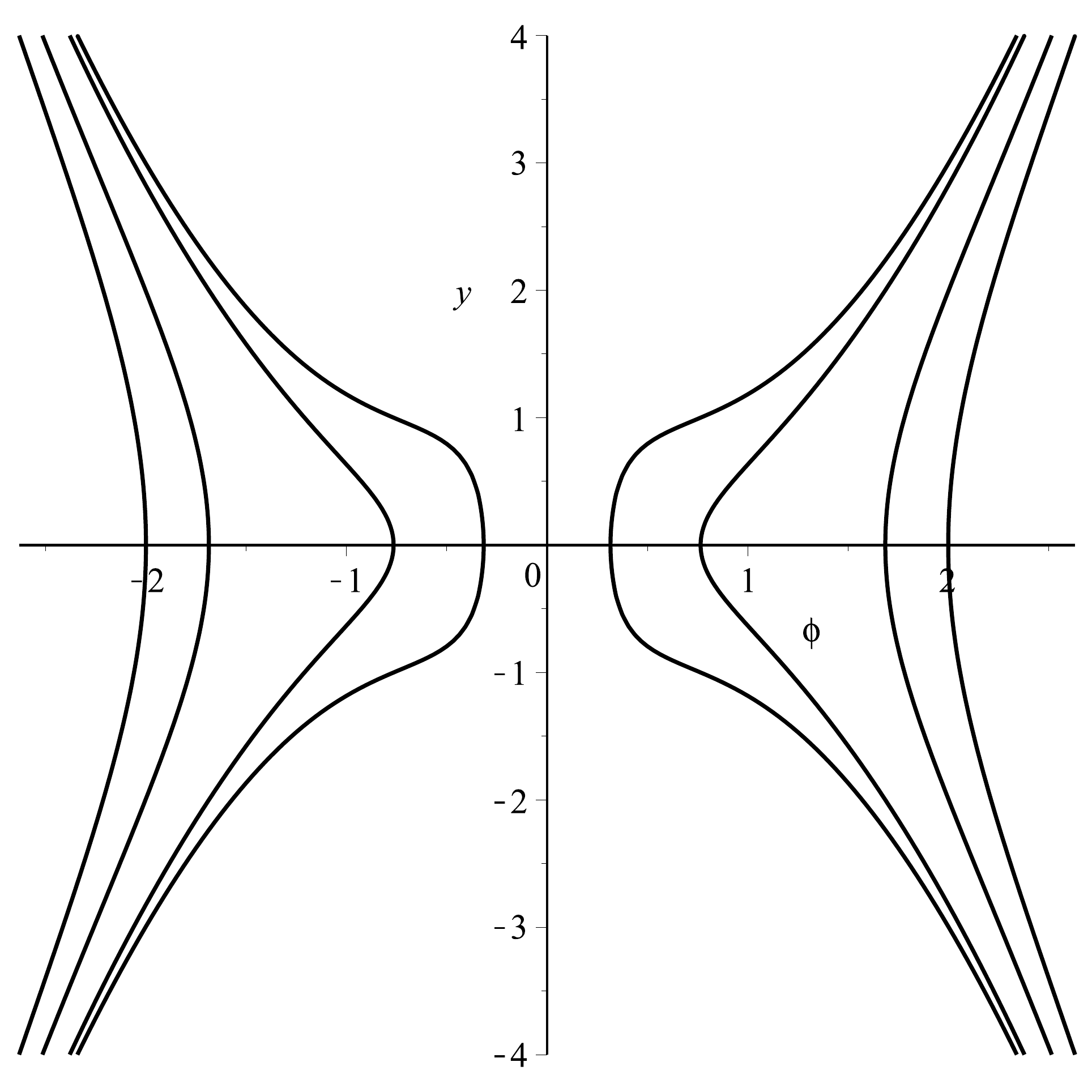}}
\subfigure[$\alpha_{1}=-1$,$\alpha_{2}=0$,$\alpha_{3}=0.1$.]{\label{fig4d}\includegraphics[height=5.8cm,width=5.8cm]{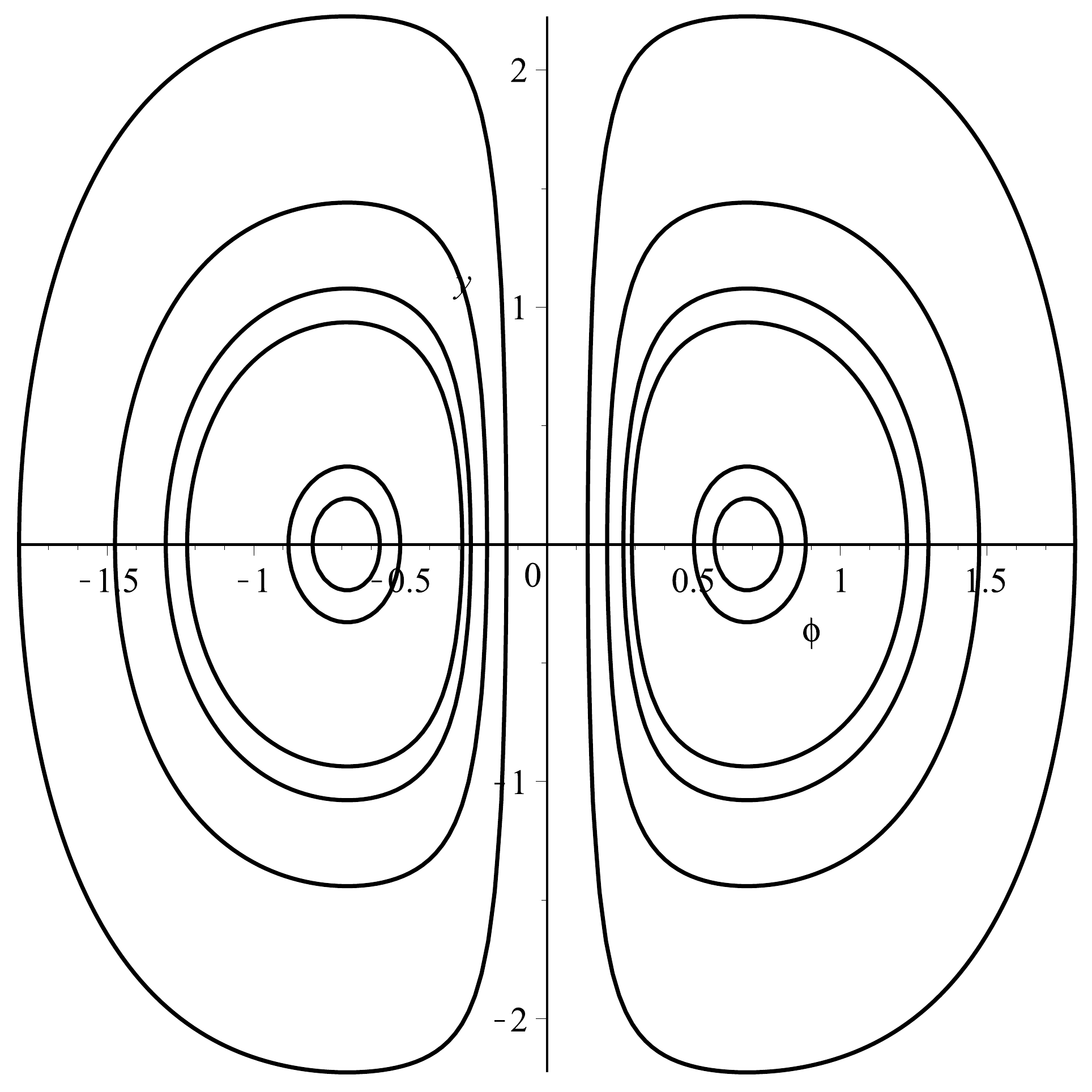}}
\caption{Phase portraits of system (\ref{13}).}
\label{fig4}
\end{figure*}
\section{Exact solutions of system (\ref{8}) and (\ref{13})}\label{sec3}
\par In this section, we seek the explicit expressions of bounded and unbounded solutions of systems (\ref{8}) and (\ref{13}).
\subsection{Bounded solutions of system (\ref{8})}
\par According to the bifurcation results in Theorem \ref{th1}, to seek  bounded solutions of system (\ref{8}), there are three cases need to be discussed.
\par\textbf{1.} When $\kappa b(a\kappa \omega-r)>0$ and $\displaystyle\frac{\kappa b}{am}<0$, we  consider two subcases as follows.
\par\noindent(1) Consider the periodic orbits shown in Fig. {\ref{fig1a}}, whose energy is lower than energy of the saddle $P_{1}$, but higher than energy of center $P_{2}$. Any one of them can be expressed by
\begin{equation*}
  y=\pm\sqrt{-\displaystyle\frac{\kappa b}{am}}\sqrt{(p-p_{1})(p-p_{2})(p_{3}-p)(p_{4}-p)},
\end{equation*}
where $p_{1}$, $p_{2}$ , $p_{3}$ and $p_{4}$ satisfy the constraint condition $p_{1}<-\sqrt{\frac{a\kappa \omega-r}{2\kappa b}}<p_{2}<p<p_{3}<\sqrt{\frac{a\kappa \omega-r}{2\kappa b}}<p_{4}$. Assuming that the period is $2T_{0}$ and choosing
initial
value $p(0)=p_{2}$, we have
$$\int_{p_{2}}^{p}\sqrt{-\frac{am}{\kappa b}}\frac{dp}{\sqrt{(p-p_{1})(p-p_{2})(p_{3}-p)(p_{4}-p)}}= \int_{0}^{\xi}d\xi, ~~0<\xi<T_{0},$$
$$-\int_{p}^{p_{2}}\sqrt{-\frac{am}{\kappa b}}\frac{dp}{\sqrt{(p-p_{1})(p-p_{2})(p_{3}-p)(p_{4}-p)}}= \int_{\xi}^{0}d\xi, ~~ -T_{0}<\xi<0,$$
which can be rewritten as
$$\int_{p_{2}}^{p}\sqrt{-\frac{am}{\kappa b}}\frac{dp}{\sqrt{(p-p_{1})(p-p_{2})(p_{3}-p)(p_{4}-p)}}= \mid\xi\mid, ~~-T_{0}<\xi<T_{0}.$$
By calculating the elliptic integral
$$
  \int_{p_{2}}^{p}\frac{dp}{\sqrt{(p-p_{1})(p-p_{2})(p_{3}-p)(p_{4}-p)}}=g\cdot sn^{-1}(\sqrt{\frac{(p_{3}-p_{1})(p-p_{2})}{(p_{3}-p_{2})(p-p_{1})}},k),
$$
where $g=\displaystyle\frac{2}{\sqrt{(p_{3}-p_{1})(p_{4}-p_{2})}}$, $k^{2}=\displaystyle\frac{(p_{3}-p_{2})(p_{4}-p_{1})}{(p_{4}-p_{2})(p_{3}-p_{1})}$, we get the first type
of periodic solution of system (\ref{8})
\begin{equation*}
  p_{b_{1}}(\xi)=p_{1}+\frac{(p_{2}-p_{1})(p_{3}-p_{1})}{(p_{3}-p_{1})-(p_{3}-p_{2})sn^{2}(\sqrt{\frac{-\kappa b(p_{4}-p_{2})(p_{3}-p_{1})}{4am}}\xi)}, ~~-T_{0}<\xi<T_{0}.
\end{equation*}
\par\noindent(2) Consider the heteroclinic orbit $\Gamma^{0}$ shown in Fig. {\ref{fig1a}}, whose energy is equal to energy of the saddle $P_{1}$. It can be expressed by
\begin{equation*}
  y=\sqrt{-\displaystyle\frac{\kappa b}{am}}\sqrt{(p-p_{5})^{2}(p_{6}-p)^{2}},
\end{equation*}
where $p_{5}$ and $p_{6}$ satisfy the constraint condition $-\sqrt{\displaystyle \frac{a\kappa \omega-r}{2\kappa
b}}=p_{5}<p<p_{6}=\sqrt{\displaystyle\frac{a\kappa \omega-r}{2\kappa b}}$. Choosing initial
value $p(0)=\displaystyle\frac{p_{5}+p_{6}}{2}=0$, we have
$$\int_{0}^{p}\sqrt{-\frac{am}{\kappa b}}\frac{dp}{{(p-p_{5})(p_{6}-p)}}= \int_{0}^{\xi}d\xi, ~~-\infty<\xi<+\infty,$$
Noting that
$$
  \int_{0}^{p}\frac{dp}{{{(p-p_{5})(p_{6}-p)}}}=\frac{2}{p_{6}-p_{5}}\tanh^{-1}\frac{2p-(p_{5}+p_{6})}{p_{6}-p_{5}},
$$
we obtain the expression of kink wave solution of system (\ref{8})
$$
 p_{b_{2}}(\xi)=\frac{p_{6}-p_{5}}{2}\tanh(\frac{p_{6}-p_{5}}{2}\sqrt{-\frac{\kappa b}{am}}\xi),~~ -\infty<\xi<+\infty.
$$
\noindent Applying similar calculation to  another heteroclinic orbit $\Gamma_{0}$ shown in Fig. {\ref{fig1a}}, we can get the corresponding  kink wave solution of system (\ref{8}) as follows
$$
 p_{b_{2^{'}}}(\xi)=\frac{p_{6}-p_{5}}{2}\tanh(\frac{p_{5}-p_{6}}{2}\sqrt{-\frac{\kappa b}{am}}\xi),~~ -\infty<\xi<+\infty.
$$
\par\textbf{2.} When $\kappa b(a\kappa \omega-r)>0$ and $\displaystyle\frac{\kappa b}{am}>0$, we need to consider three subcases in this case.
\par\noindent(1) Consider the family of periodic orbits inside the homoclinic orbit  $\Upsilon_{2}$ shown in Fig. {\ref{fig1b}}, whose energy is lower than energy of the saddle $P_{2}$, but higher than energy of center $P_{1}$. Any one of them can be expressed by
\begin{equation*}
  y=\pm\sqrt{\displaystyle\frac{\kappa b}{am}}\sqrt{(p-p_{7})(p_{8}-p)(p_{9}-p)(p_{10}-p)},
\end{equation*}
where $p_{7}$, $p_{8}$ , $p_{9}$ and $p_{10}$ satisfy the constraint condition $p_{7}<p<p_{8}<0<p_{9}<\sqrt{\frac{a\kappa \omega-r}{2\kappa b}}<p_{10}$. Assuming that the period is $2T_{1}$, similar to the calculation of solution $p_{b_{1}}(\xi)$, we get the second type of periodic solution of system (\ref{8})
\begin{equation*}
  p_{b_{3}}(\xi)=p_{10}-\frac{(p_{10}-p_{8})(p_{10}-p_{7})}{(p_{10}-p_{8})+(p_{8}-p_{7})sn^{2}(\sqrt{\frac{\kappa b}{am}}\frac{\sqrt{(p_{10}-p_{8})(p_{9}-p_{7})}}{2}\xi)},
  ~~-T_{1}<\xi<T_{1}.
\end{equation*}

\noindent Similarly, we can give  the periodic solutions of system (\ref{8}) corresponding to the family of periodic orbits inside the homoclinic orbit  $\Upsilon_{1}$ shown in Fig. {\ref{fig1b}}
\begin{equation*}
  p_{b_{3^{'}}}(\xi)=p_{8}+\frac{(p_{9}-p_{8})(p_{10}-p_{8})}{(p_{10}-p_{8})-(p_{10}-p_{9})sn^{2}(\sqrt{\frac{\kappa b}{am}}\frac{\sqrt{(p_{10}-p_{8})(p_{9}-p_{7})}}{2}\xi)},
  ~~-T_{1}<\xi<T_{1},
\end{equation*}
where $p_{7}<-\sqrt{\frac{a\kappa \omega-r}{2\kappa b}}<p_{8}<0<p_{9}<p<p_{10}$.

\par\noindent(2) Consider the homoclinic orbits shown in Fig. {\ref{fig1b}}, whose energy is equal to energy of the saddle $P_{2}$. The homoclinic orbit $\Upsilon_{2}$ can be expressed by
\begin{equation*}
  y=\pm\sqrt{\displaystyle\frac{\kappa b}{am}}\sqrt{(p+p_{11})p^{2}(p_{11}-p)},
\end{equation*}
where $p_{11}$ and $-p_{11}$  satisfy the constraint condition $-p_{11}<p<0<\sqrt{\frac{a\kappa \omega-r}{2\kappa b}}<p_{11}$. Choosing initial value $p(0)=-p_{11}$, we
have
$$\int_{-p_{11}}^{p}\sqrt{\frac{am}{\kappa b}}\frac{dp}{\sqrt{(p+p_{11})p^{2}(p_{11}-p)}}= \int_{0}^{\xi}d\xi, ~~\xi>0,$$
$$-\int_{p}^{-p_{11}}\sqrt{\frac{am}{\kappa b}}\frac{dp}{\sqrt{(p+p_{11})p^{2}(p_{11}-p)}}= \int_{\xi}^{0}d\xi, ~~ \xi<0,$$
 which can be rewritten as
 $$
 \int_{-p_{11}}^{p}\sqrt{\frac{am}{\kappa b}}\frac{dp}{\sqrt{(p+p_{11})p^{2}(p_{11}-p)}}=\mid\xi\mid.
 $$
Noting that
$$
  \int_{-p_{11}}^{p}\frac{dp}{p\sqrt{(p+p_{11})(p_{11}-p)}}= \frac{\ln(\frac{-p_{11}-\sqrt{{p_{11}}^{2}-p^{2}}}{p})}{-p_{11}},
$$
we obtain the expression of solitary wave solution of system (\ref{8})
$$
p_{b_{4}}(\xi)=\frac{-2p_{11}{\rm exp}(\sqrt{\frac{\kappa b}{am}}p_{11}\mid\xi\mid)}{{{\rm exp}(2\sqrt{\frac{\kappa b}{am}}p_{11}\mid\xi\mid)}+1},~~-\infty<\xi<+\infty.
$$
Similarly, we can get another solitary wave solution of system (\ref{8}) corresponding to the homoclinic orbit $\Upsilon_{1}$
$$ p_{b_{4^{'}}}(\xi)=\frac{2p_{11}{\rm exp}(\sqrt{\frac{\kappa b}{am}}p_{11}\mid\xi\mid)}{{{\rm exp}(2\sqrt{\frac{\kappa b}{am}}p_{11}\mid\xi\mid)}+1},~~-\infty<\xi<+\infty,$$
where $-p_{11}<-\sqrt{\frac{a\kappa \omega-r}{2\kappa b}}<0<p<p_{11}$.

\par\noindent(3) Consider the family of large amplitude periodic orbits shown in Fig. {\ref{fig1b}}, whose energy is higher than energy of the saddle $P_{2}$. Any one of them can be expressed by
\begin{equation*}
  y=\pm\sqrt{\displaystyle\frac{\kappa b}{am}}\sqrt{(p+p_{12})(p_{12}-p)(p^{2}+p_{12}^{2}-\displaystyle\frac{a\kappa \omega-r}{\kappa b})},
\end{equation*}
where $p_{12}$ and $-p_{12}$ satisfy the constraint condition $p_{12}>0$, $p_{12}^{2}-\displaystyle\frac{a\kappa \omega-r}{\kappa b}>0$ and
$-p_{12}<p<p_{12}$. Assuming that the period is $2T_{2}$ and choosing initial value $p(0)=\displaystyle\frac{-p_{12}+p_{12}}{2}=0$, we
have
$$\int_{0}^{p}\sqrt{\displaystyle\frac{am}{\kappa b}}\frac{dp}{\sqrt{(p+p_{12})(p_{12}-p)(p^{2}+p_{12}^{2}-\displaystyle\frac{a\kappa \omega-r}{\kappa b})}}=
\int_{0}^{\xi}d\xi, ~~0<\xi<T_{2},$$
$$-\int_{p}^{0}\sqrt{\displaystyle\frac{am}{\kappa b}}\frac{dp}{\sqrt{(p+p_{12})(p_{12}-p)(p^{2}+p_{12}^{2}-\displaystyle\frac{a\kappa \omega-r}{\kappa b})}}=
\int_{\xi}^{0}d\xi, ~~-T_{2}<\xi<0,$$
 which can be rewritten as
 $$
 \int_{0}^{p}\sqrt{\displaystyle\frac{am}{\kappa b}}\frac{dp}{\sqrt{(p+p_{12})(p_{12}-p)(p^{2}+p_{12}^{2}-\displaystyle\frac{a\kappa \omega-r}{\kappa
 b})}}=\mid\xi\mid,~~-T_{2}<\xi<T_{2}.
 $$
Noting that
$$
  \int_{0}^{p}\frac{dp}{\sqrt{(p+p_{12})(p_{12}-p)(p^{2}+p_{12}^{2}-\displaystyle\frac{a\kappa \omega-r}{\kappa b})}}=g\cdot sn^{-1}\bigg(\sqrt{\frac{p^{2}(2\kappa
  bp^{2}_{12}-a\kappa\omega+r)}{p_{12}^{2}(\kappa bp^{2}+\kappa bp_{12}^{2}-a\kappa\omega+r)}},k\bigg),
$$
where $g=\sqrt{\displaystyle\frac{\kappa b}{2\kappa bp_{12}^{2}-a\kappa\omega+r}}$, $k^{2}=\displaystyle\frac{\kappa bp_{12}^{2}}{2\kappa bp_{12}^{2}-a\kappa\omega+r}$, we
obtain the third type of periodic solution of system (\ref{8})
$$ p_{b_{5}}(\xi)=\sqrt{\displaystyle\frac{(\kappa bp_{12}^{2}-a\kappa\omega+r)p_{12}^{2}sn^{2}(\sqrt{\frac{2\kappa bp_{12}^{2}-a\kappa\omega+r}{am}}\xi)}{2\kappa
bp_{12}^{2}-a\kappa\omega+r-\kappa bp_{12}^{2}sn^{2}(\sqrt{\frac{2\kappa bp_{12}^{2}-a\kappa\omega+r}{am}}\xi)}},~~-T_{2}<\xi<T_{2}.$$

\textbf{3.} When $\kappa b(a\kappa \omega-r)\leq0$ and $\displaystyle\frac{\kappa b}{am}>0$, one can check that the periodic orbits shown in Figs. {\ref{fig2b}} and {\ref{fig3b}} have the same form as solution $p_{b_{5}}(\xi)$. We ignore them here for simplicity.
\subsection{Unbounded solutions of system (\ref{8})}
\par In this subsection, we need to consider two cases to get unbounded solutions of system (\ref{8}).
\par\textbf{1.} When $\kappa b(a\kappa \omega-r)>0$ and $\displaystyle\frac{\kappa b}{am}<0$, we have five subcases to  discuss.
\par\noindent(1) Consider the first type of unbounded orbits, for example $\Gamma^{1}$ and $\Gamma_{1}$, shown in Fig. {\ref{fig1a}}, whose energy $h_{0}$ is higher than energy of saddles $P_{1}$ and $P_{3}$. They can be expressed respectively by
\begin{equation*}
y=\pm\sqrt{-\displaystyle\frac{\kappa b}{am}p^{4}+\frac{a\kappa\omega-r}{am}p^{2}+2h_{0}},
\end{equation*}
where $0<p<+\infty$. For the sake of simplicity, we take $\Gamma_{1}$ for example to calculate its corresponding solution. Choosing initial value $p(0)=+\infty$, we have
$$
-\int_{+\infty}^{p}\sqrt{-\frac{am}{\kappa b}}\frac{dp}{\sqrt{p^{4}+\displaystyle\frac{r-a\kappa \omega}{\kappa b}p^{2}-\frac{2amh_{0}}{\kappa b}}}=\int_{0}^{\xi}d\xi , ~~\xi>0.
$$
Noting that
$$
  \int_{p}^{+\infty}\frac{dp}{\sqrt{p^{4}+\displaystyle\frac{r-a\kappa \omega}{\kappa b}p^{2}-\frac{2amh_{0}}{\kappa b}}}=g\cdot cn^{-1}(\frac{p^{2}-\sqrt{-\frac{2amh_{0}}{\kappa b}}}{p^{2}+\sqrt{-\frac{2amh_{0}}{\kappa b}}},k),
$$
where $g=\displaystyle\frac{1}{2\sqrt[4]{-\frac{2amh_{0}}{\kappa b}}}$, $k^{2}=\displaystyle\frac{2\sqrt{-2am\kappa bh_{0}}+a\kappa \omega-r}{4\sqrt{-2am\kappa bh_{0}}}$, we obtain the first type of unbounded solution of system (\ref{8})
$$
p_{u_{0}}(\xi)=\sqrt[4]{-\frac{2amh_{0}}{\kappa b}}\cdot\sqrt{-1+\frac{2}{1-cn(2\sqrt[4]{-\frac{2\kappa bh_{0}}{am}}\xi)}},~~0<\xi<\xi_{0},
$$
where $\xi_{0}=2\sqrt[4]{-\displaystyle\frac{am}{2\kappa bh_{0}}}\cdot\displaystyle\int_{0}^{\frac{\pi}{2}}{\frac{d\theta}{\sqrt{1-\frac{2\sqrt{-2am\kappa bh_{0}}+a\kappa \omega-r}{4\sqrt{-2am\kappa bh_{0}}}\cdot\sin^{2}\theta}}}.$\\
It is not difficult to check that the corresponding unbounded solution of $\Gamma^{1}$ has the same form as $p_{u_{0}}(\xi)$.

\par\noindent(2) Consider the second type of unbounded orbits $\Gamma^{2}$, $\Gamma_{2}$, $\Gamma^{3}$ and $\Gamma_{3}$ shown in Fig. {\ref{fig1a}}, whose energy is equal to energy of saddles $P_{1}$ and $P_{3}$. The orbits $\Gamma^{2}$ and $\Gamma_{2}$ can be expressed respectively by
\begin{equation*}
  y=\pm\sqrt{-\displaystyle\frac{\kappa b}{am}}\sqrt{(p-\sqrt{\frac{a\kappa \omega-r}{2\kappa b}})^{2}(p+\sqrt{\frac{a\kappa \omega-r}{2\kappa b}})^{2}},
\end{equation*}
where $0<\sqrt{\displaystyle\frac{a\kappa \omega-r}{2\kappa b}}<p<+\infty$. Similar to the discussion above, we only need to discuss the orbit $\Gamma_{2}$. Choosing initial value $p(0)=+\infty$, we have
$$
-\int_{+\infty}^{p}\sqrt{-\frac{am}{\kappa b}}\frac{dp}{{(p-\sqrt{\frac{a\kappa \omega-r}{2\kappa b}})(p+\sqrt{\frac{a\kappa \omega-r}{2\kappa b}})}}=\int_{0}^{\xi}d\xi , ~~\xi>0.
$$
Noting that
$$
  \int_{p}^{+\infty}\frac{dp}{{{(p-\sqrt{\frac{a\kappa \omega-r}{2\kappa b}})(p+\sqrt{\frac{a\kappa \omega-r}{2\kappa b}})}}}=\sqrt{\frac{\kappa b}{2(a\kappa\omega -r)}}\ln\Bigg(\frac{p+\sqrt{\frac{a\kappa \omega-r}{2\kappa b}}}{p-\sqrt{\frac{a\kappa \omega-r}{2\kappa b}}}\Bigg),
$$
we obtain the second type of unbounded solution of system (\ref{8})
$$ p_{u_{1}}(\xi)=\sqrt{\frac{a\kappa \omega-r}{2\kappa b}}\cdot\bigg(1+\frac{2}{\exp(\sqrt{\frac{2(r-a\kappa\omega)}{am}}\xi)-1}\bigg),~~\xi>0.$$
One can check that the corresponding unbounded solution of $\Gamma^{2}$ has the same form as $p_{u_{1}}(\xi)$. If choosing initial value $p(0)=-\infty$ and applying similar calculation to the orbit $\Gamma^{3}$, we can get the corresponding unbounded solution of system (\ref{8})
$$
p_{u_{1^{'}}}(\xi)=-\sqrt{\frac{a\kappa \omega-r}{2\kappa b}}\cdot\bigg(1+\frac{2}{\exp(\sqrt{\frac{2(r-a\kappa\omega)}{am}}\xi)-1}\bigg),~~\xi>0,
$$
where $-\infty<p<-\sqrt{\displaystyle \frac{a\kappa \omega-r}{2\kappa b}}<0$. And it is not difficult to conclude that the corresponding unbounded solution of the orbit $\Gamma_{3}$ has the same form as $p_{u_{1^{'}}}(\xi)$.

\par\noindent(3) Consider the third type of unbounded orbits, for example $\Gamma^{+}_{4}$, shown in Fig. {\ref{fig1a}}, whose energy is lower than energy of the saddle $P_{1}$, but higher than energy of center $P_{2}$. It can be expressed by
\begin{equation*}
  y=\pm\sqrt{-\displaystyle\frac{\kappa b}{am}}\sqrt{(p-p_{1^{'}})(p-p_{2^{'}})(p-p_{3^{'}})(p-p_{4^{'}})},
\end{equation*}
where $p_{1^{'}}$, $p_{2^{'}}$, $p_{3^{'}}$ and $p_{4^{'}}$ satisfy the constraint condition $p_{1^{'}}<-\sqrt{\displaystyle \frac{a\kappa \omega-r}{2\kappa b}}<p_{2^{'}} <0<p_{3^{'}}<\sqrt{\displaystyle \frac{a\kappa \omega-r}{2\kappa b}}<p_{4^{'}}<p<+\infty$. Similarly, we only need to consider the lower branch of $\Gamma^{+}_{4}$. Choosing initial value $p(0)=+\infty$, we have
$$
-\int_{+\infty}^{p}\sqrt{-\frac{am}{\kappa b}}\frac{dp}{\sqrt{(p-p_{1^{'}})(p-p_{2^{'}})(p-p_{3^{'}})(p-p_{4^{'}})}}=\int_{0}^{\xi}d\xi , ~~ \xi>0.
$$
Noting that
$$
  \int_{p}^{+\infty}{\frac{dp}{\sqrt{(p-p_{1^{'}})(p-p_{2^{'}})(p-p_{3^{'}})(p-p_{4^{'}})}}}=g\cdot sn^{-1}(\frac{p_{4^{'}}}{p},k),
$$
where $g=\displaystyle\frac{1}{p_{4^{'}}}$, $k^{2}=\displaystyle\frac{p^{2}_{3{'}}}{p_{4^{'}}^{2}}$, we get the third type of unbounded solution of system (\ref{8})
\begin{equation*}
  p_{u_{2}}(\xi)=\displaystyle\frac{p_{4^{'}}}{sn(p_{4^{'}}\sqrt{-\frac{\kappa b}{am}}\xi)}, ~~0<\xi<\xi_{1},
\end{equation*}
where $\xi_{1}=\displaystyle\frac{4}{p_{4^{'}}}\sqrt{-\frac{am}{\kappa
b}}\cdot\displaystyle\int_{0}^{\frac{\pi}{2}}{\frac{d\theta}{\sqrt{1-\frac{p^{2}_{3^{'}}}{p^{2}_{4^{'}}}\cdot\sin^{2}\theta}}}.$\\
If choosing initial value $p(0)=-\infty$  and adopting the similar calculation to the upper branch of unbounded orbit $\Gamma^{-}_{4}$, we can get the corresponding unbounded solution of system
(\ref{8})
\begin{equation*}
  p_{u_{2^{'}}}(\xi)=\displaystyle\frac{p_{1^{'}}}{sn(p_{1^{'}}\sqrt{-\frac{\kappa b}{am}}\xi)}, ~~0<\xi<\xi_{1^{'}},
\end{equation*}
where $-\infty<p<p_{1^{'}}<-\sqrt{\frac{a\kappa \omega-r}{2\kappa b}}<p_{2^{'}}<0<p_{3^{'}}<\sqrt{\frac{a\kappa \omega-r}{2\kappa b}}<p_{4^{'}}$, $\xi_{1^{'}}=-\displaystyle\frac{4}{p_{1^{'}}}\sqrt{-\frac{am}{\kappa
b}}\cdot\displaystyle\int_{0}^{\frac{\pi}{2}}{\frac{d\theta}{\sqrt{1-\frac{p^{2}_{2^{'}}}{p^{2}_{1^{'}}}\cdot\sin^{2}\theta}}}.$
\par\noindent(4) Consider the fourth type of unbounded orbits, for example $\Gamma^{+}_{5}$ and $\Gamma^{-}_{5}$, shown in Fig. {\ref{fig1a}}, whose energy is equal to energy of the center $P_{2}$. The unbounded orbit $\Gamma^{+}_{5}$ can be expressed by
\begin{equation*}
  y=\pm\sqrt{-\displaystyle\frac{\kappa b}{am}}\sqrt{(p+p_{5^{'}})p^{2}(p-p_{5^{'}})},
\end{equation*}
where $p_{5^{'}}$ satisfies the constraint condition $0<\sqrt{\frac{a\kappa \omega-r}{2\kappa b}}<p_{5^{'}}<p<+\infty$.
Choosing initial value $p(0)=+\infty$, we have
$$
-\int_{+\infty}^{p}\sqrt{-\frac{am}{\kappa b}}\frac{dp}{\sqrt{(p+p_{5^{'}})p^{2}(p-p_{5^{'}})}}=\int_{0}^{\xi}d\xi, ~~\xi>0.
$$
Noting that
$$
  \int_{p}^{+\infty}\frac{dp}{p\sqrt{(p+p_{5^{'}})(p-p_{5^{'}})}}= \frac{1}{p_{5^{'}}}\arcsin\frac{p_{5^{'}}}{p},
$$
we obtain the fourth type of unbounded solution of system (\ref{8})
$$
p_{u_{3}}(\xi)=p_{5^{'}}\csc(p_{5^{'}}\sqrt{-\frac{\kappa b}{am}}\xi),~~0<\xi<\xi_{2},
$$
where $\xi_{2}=\displaystyle\frac{2\pi}{p_{5^{'}}}\sqrt{-\frac{am}{\kappa b}}$.
\par\noindent If choosing initial value $p(0)=-\infty$, we can get another unbounded solution of system (\ref{8}) corresponding to the upper branch of unbounded orbit $\Gamma^{-}_{5}$
$$
p_{u_{3^{'}}}(\xi)=p_{5^{'}}\csc(-p_{5^{'}}\sqrt{-\frac{\kappa b}{am}}\xi),~~0<\xi<\xi_{2^{'}},
$$
where $-\infty<p<p_{5^{'}}<-\sqrt{\frac{a\kappa \omega-r}{2\kappa b}}<0$, $\xi_{2^{'}}=-\displaystyle\frac{2\pi}{p_{5^{'}}}\sqrt{-\frac{am}{\kappa b}}$.
\par\noindent(5) Consider the fifth type of unbounded orbit, for example $\Gamma^{+}_{6}$, shown in Fig. {\ref{fig1a}}, whose energy is lower than energy of the center $P_{2}$. It can be expressed by
\begin{equation*}
  y=\pm\sqrt{-\displaystyle\frac{\kappa b}{am}}\sqrt{(p+p_{6^{'}})(p-p_{6^{'}})(p^{2}+p_{6^{'}}^{2}-\frac{a\kappa\omega-r}{\kappa b})},
\end{equation*}
where $p_{6^{'}}$ satisfies the constraint condition $0<\sqrt{\displaystyle\frac{a\kappa\omega-r}{\kappa b}}<p_{6^{'}}<p<+\infty$.
Choosing initial value $p(0)=+\infty$, we
have
$$
-\int_{+\infty}^{p}\sqrt{-\frac{am}{\kappa b}}\frac{dp}{\sqrt{(p+p_{6^{'}})(p-p_{6^{'}})(p^{2}+p_{6^{'}}^{2}-\displaystyle\frac{a\kappa\omega-r}{\kappa
b})}}=\int_{0}^{\xi}d\xi, ~~\xi>0.
$$
Noting that
 $$
 \int_{p}^{+\infty}\frac{dp}{\sqrt{(p+p_{6^{'}})(p-p_{6^{'}})(p^{2}+p_{6^{'}}^{2}-\displaystyle\frac{a\kappa\omega-r}{\kappa b})}}=g\cdot sn^{-1}(\sqrt{\displaystyle\frac{2\kappa
 bp_{6^{'}}^{2}-a\kappa\omega+r}{\kappa bp^{2}+\kappa bp_{6^{'}}^{2}-a\kappa\omega+r}},k),
 $$
 where $g=\displaystyle\sqrt{\frac{\kappa b}{2\kappa bp_{6^{'}}^{2}-a\kappa\omega+r}}$, $k^{2}=\displaystyle\frac{\kappa bp_{6^{'}}^{2}-a\kappa\omega+r}{2\kappa
 bp_{6^{'}}^{2}-a\kappa\omega+r}$,
we obtain the fifth type of unbounded solution of system (\ref{8})
$$
p_{u_{4}}(\xi)=\sqrt{-\frac{\kappa bp_{6^{'}}^{2}-a\kappa\omega+r}{\kappa b}+\frac{2\kappa bp_{6^{'}}^{2}-a\kappa\omega+r}{\kappa bsn^{2}(\sqrt{-\frac{2\kappa
bp_{6^{'}}^{2}-a\kappa\omega+r}{am}}\xi)}},~~0<\xi<\xi_{3},
$$
where $\xi_{3}=\sqrt{-\displaystyle\frac{4am}{2\kappa
bp_{6^{'}}^{2}-a\kappa\omega+r}}\cdot\displaystyle\int_{0}^{\frac{\pi}{2}}{\displaystyle\frac{d\theta}{\sqrt{1-\frac{\kappa bp_{6^{'}}^{2}-a\kappa\omega+r}{2\kappa
bp_{6^{'}}^{2}-a\kappa\omega+r}\cdot\sin^{2}\theta}}}$.
\par\noindent If choosing initial value $p(0)=-\infty$ and applying similar calculation to the upper branch of unbounded orbit $\Gamma^{-}_{6}$, it is obvious to conclude that the corresponding unbounded solution has the same form as unbounded solution $p_{u_{4}}(\xi)$.
\par\textbf{2.} When $\kappa b(a\kappa \omega-r)\leq0$ and $\displaystyle\frac{\kappa b}{am}<0$, we need to consider three subcases in this case.
\par\noindent(1) Consider the first type of unbounded orbits, for example $\Upsilon^{3}$, $\Upsilon_{3}$, $Y^{1}$ and $Y_{1}$, shown in Figs. {\ref{fig2a}} and {\ref{fig3a}}, whose energy $h_{0^{'}}$ is higher than energy of the saddle $P_{2}$. One can check that the corresponding unbounded solution of $\Upsilon_{3}$ has the same form as solution $p_{u_{0}}(\xi)$. We ignore it here for simplicity.

In particular, when $\kappa b(a\kappa \omega-r)=0$, the unbounded orbits $Y^{1}$ and $Y_{1}$ shown in Fig. {\ref{fig3a}} can be expressed respectively by
\begin{equation*}
  y=\pm\sqrt{-\displaystyle\frac{\kappa b}{am}p^{4}+2h_{0^{'}}},
\end{equation*}
where $0<p<+\infty$. Choosing initial value $p(0)=+\infty$, we have
$$
-\int_{+\infty}^{p}\sqrt{-\displaystyle\frac{am}{\kappa b}}\frac{dp}{\sqrt{p^{4}-\displaystyle\frac{2amh_{0^{'}}}{\kappa b}}}=\int_{0}^{\xi}d\xi, ~~\xi>0.
$$
Noting that
 $$
 \int_{p}^{+\infty}\frac{dp}{\sqrt{p^{4}-\displaystyle\frac{2amh_{0^{'}}}{\kappa b}}}=\displaystyle\frac{1}{2}\sqrt[4]{-\displaystyle\frac{\kappa b}{2amh_{0^{'}}}}\cdot cn^{-1}(\frac{p^{2}-1}{p^{2}+1},k),
 $$
where $k^{2}=\displaystyle\frac{1}{2}$, we obtain the sixth type of unbounded solution of system (\ref{8})
$$
p_{u_{5^{'}}}(\xi)=\sqrt{\displaystyle\frac{2}{1-cn(8h_{0^{'}}\xi)}-1},~~0<\xi<\xi_{4^{'}},\\
$$
where $\xi_{4^{'}}=\displaystyle\frac{1}{2h_{0^{'}}}\cdot\displaystyle\int_{0}^{\frac{\pi}{2}}{\frac{d\theta}{\sqrt{1-\frac{1}{2}\cdot\sin^{2}\theta}}}$.\\

\par\noindent(2) Consider the second type of unbounded orbits shown in Figs. {\ref{fig2a}} and {\ref{fig3a}}, whose energy is equal to energy of the saddle $P_{2}$. The unbounded orbits $\Upsilon^{4}$ and $\Upsilon_{4}$ can be expressed by
\begin{equation*}
  y=\pm\sqrt{-\displaystyle\frac{\kappa b}{am}}\sqrt{p^{4}+\frac{r-a\kappa\omega}{\kappa b}p^{2}},
\end{equation*}
where $0<p<+\infty$.
Choosing initial value $p(0)=+\infty$, we have
$$
-\int_{+\infty}^{p}\sqrt{-\frac{am}{\kappa b}}\frac{dp}{p\sqrt{p^{2}+\displaystyle\frac{r-a\kappa\omega}{\kappa b}}}=\int_{0}^{\xi}d\xi, ~~\xi>0.
$$
Noting that
 $$
 \int_{p}^{+\infty}\frac{dp}{p\sqrt{p^{2}+\displaystyle\frac{r-a\kappa\omega}{\kappa b}}}=\sqrt{\frac{\kappa b}{r-a\kappa\omega}}\cdot\ln\bigg(\frac{\sqrt{p^{2}+\frac{r-a\kappa\omega}{\kappa b}}+\sqrt{\frac{r-a\kappa\omega}{\kappa b}}}{p}\bigg),
 $$
we obtain the seventh type of unbounded solution of system (\ref{8})
$$
p_{u_{6}}(\xi)=2\sqrt{\frac{r-a\kappa\omega}{\kappa b}}\cdot\displaystyle\frac{\exp(\sqrt{\frac{a\kappa\omega-r}{am}}\xi)}{\exp(2\sqrt{\frac{a\kappa\omega-r}{am}}\xi)-1},~~\xi>0.\\
$$
Similar calculation can be applied to the unbounded orbit $\Upsilon^{5}$ shown in Fig. {\ref{fig2a}}. If choosing $p(0)=-\infty$ and $-\infty<p<0$, we obtain another unbounded solution of system (\ref{8}) as follows
$$
p_{u_{6^{'}}}(\xi)=2\sqrt{\frac{r-a\kappa\omega}{\kappa b}}\cdot\displaystyle\frac{\exp(\sqrt{\frac{a\kappa\omega-r}{am}}\xi)}{1-\exp(2\sqrt{\frac{a\kappa\omega-r}{am}}\xi)},~~\xi>0.\\
$$
In particular, when $\kappa b(a\kappa \omega-r)=0$, the unbounded orbits $Y^{2}$ and $Y_{2}$ shown in Fig. {\ref{fig3a}} can be expressed respectively by
\begin{equation*}
  y=\pm\sqrt{-\displaystyle\frac{\kappa b}{am}}p^{2},
\end{equation*}
where $0<p<+\infty$. By a direct calculation, we obtain the corresponding unbounded solution of system (\ref{8})
$$
p_{u_{7}}(\xi)=\sqrt{-\frac{am}{\kappa b}}\cdot\frac{1}{\xi},~~\xi>0.\\
$$
If choosing $p(0)=-\infty$ and adopting the similar calculation to the unbounded orbit $Y^{3}$ shown in Fig. {\ref{fig3a}}, we have
$$
p_{u_{7^{'}}}(\xi)=-\sqrt{-\frac{am}{\kappa b}}\cdot\frac{1}{\xi},~~\xi>0,\\
$$
where $-\infty<p<0$.
\par\noindent(3) Consider the third type of unbounded orbits, for example $\Upsilon^{+}_{6}$, $\Upsilon^{-}_{6}$, $Y^{+}_{4}$ and $Y^{-}_{4}$, shown in Figs. {\ref{fig2a}} and {\ref{fig3a}}, whose energy is lower than energy of the saddle $P_{2}$. It is not difficult to conclude that the corresponding unbounded solutions have the same form as solution $p_{u_{4}}(\xi)$. Due to the the tedious expressions of the solutions, we ignore them here for simplicity.
\subsection{Bounded solutions of system (\ref{13})}
\par From the energy function
$H(\phi,y)=\displaystyle\frac{1}{2}y^{2}-\displaystyle\frac{\alpha_{1}}{4}\phi^{4}-\displaystyle\frac{\alpha_{2}}{2}\phi^{2}+\displaystyle\frac{\alpha_{3}}{2}\frac{1}{\phi^{2}}=h$
and the first equation of system (\ref{13}), we can obtain the following expression
\begin{equation}\label{18}
\begin{aligned}
\xi&=\int_{\phi_{0}}^{\phi}\frac{\phi d\phi}{\sqrt{\frac{\alpha_{1}}{2}\phi^{6}+\alpha_{2}\phi^{4}+2h\phi^{2}-\alpha_{3}}}\\
&=\int_{\psi_{0}}^{\psi}\frac{d\psi}{2\sqrt{\frac{\alpha_{1}}{2}\psi^{3}+\alpha_{2}\psi^{2}+2h\psi-\alpha_{3}}},
\end{aligned}
\end{equation}
where $\phi>0$ and $\psi=\phi^{2}$. Due to the symmetry of system (\ref{13}), it is easy to check that the bounded solutions of system (\ref{13}) where $\phi<0$ have the same expressions of bounded solutions of system (\ref{13}) where $\phi>0$ except for the difference of signs.
\par\textbf{I.} When $\alpha_{1}>0$, $\alpha_{2}<0$ and $0<\alpha_{3}<\displaystyle-\frac{4\alpha_{2}^{3}}{27\alpha_{1}^{2}}$, we have two subcases in this case.
\par\noindent(i) Consider the periodic orbits shown in Fig. {\ref{fig4a}}, whose energy is lower than energy of the saddle $\widetilde{P}_{3}$, but higher than energy of center $\widetilde{P}_{1}$. Assuming that $r_{1}$, $r_{2}$ and $r_{3}$ satisfy the constraint condition $0<r_{1}<\psi<r_{2}<r_{3}$, the period is $2T_{0^{'}}$ and choosing $\psi_{0}=\psi(0)=r_{1}$, we have
$$
\int_{r_{1}}^{\psi}{\frac{d\psi}{\sqrt{2\alpha_{1}}\sqrt{(\psi-r_{1})(r_{2}-\psi)(r_{3}-\psi)}}}=\int_{0}^{\xi}{d\xi},~~0<\xi<T_{0^{'}},
$$
$$
-\int_{r_{1}}^{\psi}{\frac{d\psi}{\sqrt{2\alpha_{1}}\sqrt{(\psi-r_{1})(r_{2}-\psi)(r_{3}-\psi)}}}=\int_{\xi}^{0}{d\xi},~~-T_{0^{'}}<\xi<0,
$$
which can be rewritten as

$$
\int_{r_{1}}^{\psi}{\frac{d\psi}{\sqrt{2\alpha_{1}}\sqrt{(\psi-r_{1})(r_{2}-\psi)(r_{3}-\psi)}}}=\mid\xi\mid, ~~-T_{0^{'}}<\xi<T_{0^{'}}.
$$
Noting that
$$
  \int_{r_{1}}^{\psi}{\frac{d\psi}{\sqrt{(\psi-r_{1})(r_{2}-\psi)(r_{3}-\psi)}}}=g\cdot sn^{-1}(\sqrt{\frac{\psi-r_{1}}{r_{2}-r_{1}}},k),
$$
where $g=\displaystyle\frac{2}{\sqrt{r_{3}-r_{1}}}$, $k^{2}=\displaystyle\frac{r_{2}-r_{1}}{r_{3}-r_{1}}$, we get the first type of periodic solution of system (\ref{13})
\begin{equation*}
  \phi_{b_{1}}(\xi)=\sqrt{r_{1}+(r_{2}-r_{1})sn^{2}(\sqrt{\frac{\alpha_{1}(r_{3}-r_{1})}{2}}\xi)},~~-T_{0^{'}}<\xi<T_{0^{'}}.\\
\end{equation*}
\par\noindent(ii) Consider the homoclinic orbit $\Pi^{+}_{0}$ shown in Fig. {\ref{fig4a}}, whose energy is equal to energy of the saddle $\widetilde{P}_{3}$. Assuming that $r_{4}$ and $r_{5}$ satisfy the constraint condition $0<r_{4}<\psi<r_{5}$ and choosing $\psi_{0}=\psi(0)=r_{4}$, we have
\begin{equation*}
\begin{aligned}
\int_{r_{4}}^{\psi}{\frac{d\psi}{\sqrt{2\alpha_{1}}\sqrt{(\psi-r_{4})(r_{5}-\psi)^{2}}}}
&=\int_{0}^{\xi}{d\xi},~~\xi>0,\\
-\int_{\psi}^{r_{4}}{\frac{d\psi}{\sqrt{2\alpha_{1}}\sqrt{(\psi-r_{4})(r_{5}-\psi)^{2}}}}
&=\int_{\xi}^{0}{d\xi},~~\xi<0,
\end{aligned}
\end{equation*}
which can be rewritten as
$$
  \int_{r_{4}}^{\psi}{\frac{d\psi}{\sqrt{2\alpha_{1}}(r_{5}-\psi)\sqrt{\psi-r_{4}}}}=\mid\xi\mid,~~-\infty<\xi<+\infty.
$$
Noting that
$$
  \int_{r_{4}}^{\psi}{\frac{d\psi}{(r_{5}-\psi)\sqrt{\psi-r_{4}}}}=-\frac{1}{\sqrt{r_{5}-r_{4}}}\ln{\frac{\sqrt{r_{5}-r_{4}}-\sqrt{\psi-r_{4}}}{\sqrt{r_{5}-r_{4}}+\sqrt{\psi-r_{4}}}},
$$
we obtain the expression of solitary wave solution of system (\ref{13})
$$
\phi_{b_{2}}(\xi)=\sqrt{r_{4}+ \frac{(r_{5}-r_{4})(1-{\rm exp}(\sqrt{2\alpha_{1}(r_{5}-r_{4})}\xi))^{2}}{(1+{\rm exp}(\sqrt{2\alpha_{1}(r_{5}-r_{4})}\xi))^{2}}},~~-\infty<\xi<+\infty.\\
$$

\textbf{II.} When $\alpha_{1}<0$, there only exist two families of periodic orbits shown in Fig. {\ref{fig4d}}. Assuming that $r_{6}$ and $r_{7}$  satisfy the constraint conditions $0<r_{6}<\psi<r_{7}$, $r_{6}+r_{7}+\displaystyle\frac{2\alpha_{2}}{\alpha_{1}}>0$, the period is $2T_{1^{'}}$ and choosing $\psi_{0}=\psi(0)=r_{6}$, we have
\begin{equation*}
\begin{aligned}
\int_{r_{6}}^{\psi}{\frac{d\psi}{\sqrt{-2\alpha_{1}}\sqrt{(\psi-r_{6})(r_{7}-\psi)(\psi+r_{6}+r_{7}+\frac{2\alpha_{2}}{\alpha_{1}})}}}
&=\int_{0}^{\xi}{d\xi},~~0<\xi<T_{1^{'}},\\
-\int_{\psi}^{r_{6}}{\frac{d\psi}{\sqrt{-2\alpha_{1}}\sqrt{(\psi-r_{6})(r_{7}-\psi)(\psi+r_{6}+r_{7}+\frac{2\alpha_{2}}{\alpha_{1}})}}}
&=\int_{\xi}^{0}{d\xi},~~-T_{1^{'}}<\xi<0,
\end{aligned}
\end{equation*}
which can be rewritten as
$$
  \int_{r_{6}}^{\psi}{\frac{d\psi}{\sqrt{-2\alpha_{1}}\sqrt{(\psi-r_{6})(r_{7}-\psi)(\psi+r_{6}+r_{7}+\frac{2\alpha_{2}}{\alpha_{1}})}}}
  =\mid\xi\mid,~~-T_{1^{'}}<\xi<T_{1^{'}}.
$$
Noting that
$$
  \int_{r_{6}}^{\psi}{\frac{d\psi}{\sqrt{(\psi-r_{6})(r_{7}-\psi)(\psi+r_{6}+r_{7}+\frac{2\alpha_{2}}{\alpha_{1}})}}}=g\cdot sn^{-1}(\sqrt{\frac{(2r_{7}+r_{6}+\frac{2\alpha_{2}}{\alpha_{1}})(\psi-r_{6})}{(r_{7}-r_{6})(\psi+r_{6}+r_{7}+\frac{2\alpha_{2}}{\alpha_{1}})}},k),
$$
where $g=\displaystyle\frac{2}{\sqrt{2r_{7}+r_{6}+\frac{2\alpha_{2}}{\alpha_{1}}}}$, $k^{2}=\displaystyle\frac{r_{7}-r_{6}}{2r_{7}+r_{6}+\frac{2\alpha_{2}}{\alpha_{1}}}$,
we obtain the second type of periodic solution of system (\ref{13})
$$
\phi_{b_{3}}(\xi)=\sqrt{r_{6}+\displaystyle\frac{(2\alpha_{1}r_{6}+\alpha_{1}r_{7}+2\alpha_{2})(r_{7}-r_{6})sn^{2}(\sqrt{-\frac{
2\alpha_{1}r_{7}+\alpha_{1}r_{6}+2\alpha_{2}}{2}}\xi)}{\alpha_{1}(r_{7}-r_{6})
sn^{2}(\sqrt{-\frac{2\alpha_{1}r_{7}+\alpha_{1}r_{6}+2\alpha_{2}}{2}}\xi)-(2\alpha_{1}r_{7}+\alpha_{1}r_{6}+2\alpha_{2})}},~~-T_{1^{'}}<\xi<T_{1^{'}}.
$$
\subsection{Unbounded solutions of system (\ref{13})}
\par In this subsection, to seek the unbounded solutions of system (\ref{13}) where $\psi>0$ and $\phi=\sqrt{\psi}>0$,  we need to discuss three cases. If applying similar calculation to the unbounded orbits of system (\ref{13}) where $\phi<0$, one can check the corresponding unbounded solutions have the same expressions as unbounded solutions where $\phi>0$ except for the difference of signs.
\par\textbf{I.} When $\alpha_{1}>0$, $\alpha_{2}<0$ and $0<\alpha_{3}<\displaystyle-\frac{4\alpha_{2}^{3}}{27\alpha_{1}^{2}}$, we have five subcases in this case.
\par\noindent(i) Consider the first type of unbounded orbits, for example $\Pi_{1}$, shown in Fig. {\ref{fig4a}}, whose energy is higher than energy of the saddle $\widetilde{P}_{3}$. We consider to discuss the lower branch of orbit $\Pi_{1}$ for simplicity. Assuming that $r_{1^{'}}$ satisfies the constraint condition $0<r_{1^{'}}<\psi<+\infty$, $h_{1}\in(h(\widetilde{P}_{3}),+\infty)$ and choosing
$\psi_{0}=\psi(0)=+\infty$, we have
$$
-\int_{+\infty}^{\psi}{\frac{d\psi}{\sqrt{2\alpha_{1}}\sqrt{(\psi-r_{1^{'}})[\psi^{2}+(\frac{2\alpha_{2}}{\alpha_{1}}+r_{1^{'}})\psi+r_{1^{'}}+\frac{4h_{1}}{\alpha_{1}}]}}}=\int_{0}^{\xi}{d\xi},~~\xi>0.
$$
Noting that
$$
\int_{\psi}^{+\infty}{\frac{d\psi}{\sqrt{(\psi-r_{1^{'}})[\psi^{2}+(\frac{2\alpha_{2}}{\alpha_{1}}+r_{1^{'}})\psi+r_{1^{'}}+\frac{4h_{1}}{\alpha_{1}}]}}}=g\cdot
cn^{-1}(\frac{\psi-r_{1^{'}}-A}{\psi-r_{1^{'}}+A},k),
$$
where $A^{2}=\displaystyle\frac{2\alpha_{1}r^{2}_{1^{'}}+(\alpha_{1}+2\alpha_{2})r_{1^{'}}+4h_{1}}{\alpha_{1}}$, $g=\displaystyle\frac{1}{\displaystyle\sqrt{A}}=\sqrt{\displaystyle\frac{\alpha_{1}}{2\alpha_{1}r_{1^{'}}^{2}+(\alpha_{1}+2\alpha_{2})r_{1^{'}}+4h_{1}}}$ and
$k^{2}=\displaystyle\frac{\sqrt{8\alpha_{1}^{2}r_{1^{'}}^{2}+4\alpha_{1}(\alpha_{1}+2\alpha_{2})r_{1^{'}}+16\alpha_{1}h_{1}}-(2\alpha_{2}+3\alpha_{1}r_{1^{'}})}{\sqrt{32\alpha^{2}_{1}r_{1^{'}}^{2}+16\alpha_{1}(\alpha_{1}+\alpha_{2})r_{1^{'}}+64\alpha_{1}h_{1}}}$,
we get the first type of unbounded solution of system (\ref{13})
\begin{equation*}
  \phi_{u_{1}}(\xi)=\sqrt{r_{1^{'}}-\sqrt[4]{\frac{2\alpha_{1}r_{1^{'}}^{2}+(\alpha_{1}+2\alpha_{2})r_{1^{'}}+4h_{1}}{\alpha_{1}}}
  +\frac{2\sqrt[4]{\frac{2\alpha_{1}r_{1^{'}}^{2}+(\alpha_{1}+2\alpha_{2})r_{1^{'}}+4h_{1}}{\alpha_{1}}}}
  {1-cn(\sqrt{2\alpha_{1}}\sqrt[4]{\frac{2\alpha_{1}r_{1^{'}}^{2}+(\alpha_{1}+2\alpha_{2})r_{1^{'}}+4h_{1}}{\alpha_{1}}}\xi)}},\\
\end{equation*}
where $0<\xi<\xi_{0^{''}}$ and\\
$\xi_{0^{''}}=\displaystyle\frac{4}{\sqrt{2\alpha_{1}}\sqrt[4]{\frac{2\alpha_{1}r_{1^{'}}^{2}+(\alpha_{1}+2\alpha_{2})r_{1^{'}}+4h_{1}}
{\alpha_{1}}}}\cdot\int_{0}^{\frac{\pi}{2}}{\frac{d\theta}{\sqrt{1-\frac{\sqrt{8\alpha_{1}^{2}r_{1^{'}}^{2}+4\alpha_{1}(\alpha_{1}+2\alpha_{2})r_{1^{'}}+16\alpha_{1}h_{1}}-(2\alpha_{2}+3\alpha_{1}r_{1^{'}})}{\sqrt{32\alpha^{2}_{1}r_{1^{'}}^{2}+16\alpha_{1}(\alpha_{1}+\alpha_{2})r_{1^{'}}+64\alpha_{1}h_{1}}}\cdot
\sin^{2}\theta}}}.$\\
Similar calculation can be applied to the upper branch of the orbit $\Pi_{1}$. And one can check the corresponding solution has the same form as solution $\phi_{u_{1}}(\xi)$.

\par\noindent(ii) Consider the second type of unbounded orbit $\Pi_{2}$ shown in Fig. {\ref{fig4a}}, whose energy is equal to energy of the saddle $\widetilde{P}_{3}$. Similar to the discussion above, we only need to discuss the lower branch of orbit $\Pi_{2}$. Assuming that $r_{2^{'}}$ and $r_{3^{'}}$  satisfy the constraint condition $0<r_{2^{'}}<r_{3^{'}}<\psi<+\infty$ and choosing $\psi_{0}=\psi(0)=+\infty$, we have
\begin{equation*}
\begin{aligned}
-\int_{+\infty}^{\psi}{\frac{d\psi}{\sqrt{2\alpha_{1}}\sqrt{(\psi-r_{2^{'}})(\psi-r_{3^{'}})^{2}}}}
&=\int_{0}^{\xi}{d\xi},~~\xi>0.
\end{aligned}
\end{equation*}
Noting that
$$
  \int_{\psi}^{+\infty}{\frac{d\psi}{(\psi-r_{3^{'}})\sqrt{\psi-r_{2^{'}}}}}=
  -\frac{1}{\sqrt{r_{3^{'}}-r_{2^{'}}}}\ln{\frac{\sqrt{\psi-r_{2^{'}}}-\sqrt{r_{3^{'}}-r_{2^{'}}}}{\sqrt{\psi-r_{2^{'}}}+\sqrt{r_{3^{'}}-r_{2^{'}}}}},
$$
we get the second type of unbounded solution of system (\ref{13})
$$
\phi_{u_{2}}(\xi)=\sqrt{r_{2^{'}}+ \frac{(r_{3^{'}}-r_{2^{'}})(1+{\rm exp}(\sqrt{2\alpha_{1}(r_{3^{'}}-r_{2^{'}})}\xi))^{2}}{(1-{\rm
exp}(\sqrt{2\alpha_{1}(r_{3^{'}}-r_{2^{'}})}\xi))^{2}}},~~\xi>0.
$$
\par\noindent(iii) Consider the third type of unbounded orbits, for example unbounded orbit $\Pi_{3}$, shown in Fig. {\ref{fig4a}}, whose energy is higher than energy of the center $\widetilde{P}_{1}$, but lower than energy of saddle $\widetilde{P}_{3}$.
Assuming that $r_{4^{'}}$, $r_{5^{'}}$ and $r_{6^{'}}$ satisfy the
constraint condition $0<r_{4^{'}}<r_{5^{'}}<r_{6^{'}}<\psi<+\infty$ and choosing $\psi_{0}=\psi(0)=+\infty$, it can be expressed by
$$
-\int_{+\infty}^{\psi}{\frac{d\psi}{\sqrt{2\alpha_{1}}\sqrt{(\psi-r_{4^{'}})(\psi-r_{5^{'}})(\psi-r_{6^{'}})}}}=\int_{0}^{\xi}{d\xi},~~\xi>0.
$$
Noting that
$$
  \int_{\psi}^{+\infty}{\frac{d\psi}{\sqrt{(\psi-r_{4^{'}})(\psi-r_{5^{'}})(\psi-r_{6^{'}})}}}=g\cdot sn^{-1}(\sqrt{\frac{r_{6^{'}}-r_{4^{'}}}{\psi-r_{4^{'}}}},k),
$$
where $g=\displaystyle\frac{2}{\sqrt{r_{6^{'}}-r_{4^{'}}}}$, $k^{2}=\displaystyle\frac{r_{5^{'}}-r_{4^{'}}}{r_{6^{'}}-r_{4^{'}}}$, we get the third type of unbounded solution of
system (\ref{13})
\begin{equation*}
  \phi_{u_{3}}(\xi)=\sqrt{r_{4^{'}}+\frac{r_{6^{'}}-r_{4^{'}}}{sn^{2}(\sqrt{\displaystyle\frac{\alpha_{1}(r_{6^{'}}-r_{4^{'}})}{2}}\xi)}},~~0<\xi<\xi_{1^{''}},\\
\end{equation*}
where $\xi_{1^{''}}=\sqrt{\displaystyle\frac{8}{\alpha_{1}(r_{6^{'}}-r_{4^{'}})}}\cdot\displaystyle\int_{0}^{\frac{\pi}{2}}{\displaystyle\frac{d\theta}
{\sqrt{1-\frac{r_{5^{'}}-r_{4^{'}}}{r_{6^{'}}-r_{4^{'}}}\cdot\sin^{2}\theta}}}.$
\par\noindent(iv) Consider the fourth type of unbounded orbits, for example $\Pi_{4}$, shown in Fig. {\ref{fig4a}}, whose energy is equal to energy of the center $\widetilde{P}_{2}$. 
 Assuming that $r_{7^{'}}$ and $r_{8^{'}}$ satisfy the constraint condition $0<r_{7^{'}}<r_{8^{'}}<\psi<+\infty$ and choosing $\psi_{0}=\psi(0)=+\infty$, we have
\begin{equation*}
\begin{aligned}
-\int_{+\infty}^{\psi}{\frac{d\psi}{\sqrt{2\alpha_{1}}\sqrt{(\psi-r_{7^{'}})^{2}(\psi-r_{8^{'}})}}}
&=\int_{0}^{\xi}{d\xi},~~\xi>0.
\end{aligned}
\end{equation*}
Noting that
$$
  \int_{\psi}^{+\infty}{\frac{d\psi}{(\psi-r_{7^{'}})\sqrt{\psi-r_{8^{'}}}}}=
  \frac{1}{\sqrt{r_{8^{'}}-r_{7^{'}}}}(\pi-2\arctan\sqrt{\frac{\psi-r_{8^{'}}}{r_{8^{'}}-r_{7^{'}}}}),
$$
we obtain the fourth type of unbounded solution of system (\ref{13})
$$
\phi_{u_{4}}(\xi)=\sqrt{r_{8^{'}}+(r_{8^{'}}-r_{7^{'}})\cdot \cot^{2}(\sqrt{\frac{\alpha_{1}(r_{8^{'}}-r_{7^{'}})}{2}}\xi)} ,~~0<\xi<\xi_{2^{''}},
$$
where $\xi_{2^{''}}=\sqrt{\displaystyle\frac{2}{\alpha_{1}(r_{8^{'}}-r_{7^{'}})}}\cdot\pi$.
\par\noindent(v) Consider the fifth type of unbounded orbits, for example $\Pi_{5}$, shown in Fig. {\ref{fig4a}}, whose energy is lower than energy of the center $\widetilde{P}_{2}$. It is not difficult to obtain the fifth type of unbounded solution as follows
 \begin{equation*}
  \phi_{u_{5}}(\xi)=\sqrt{r_{9^{'}}-\sqrt[4]{\frac{2\alpha_{1}r_{9^{'}}^{2}+(\alpha_{1}+2\alpha_{2})r_{9^{'}}+4h_{2}}{\alpha_{1}}}
  +\frac{2\sqrt[4]{\frac{2\alpha_{1}r_{9^{'}}^{2}+(\alpha_{1}+2\alpha_{2})r_{9^{'}}+4h_{2}}{\alpha_{1}}}}
  {1-cn(\sqrt{2\alpha_{1}}\sqrt[4]{\frac{2\alpha_{1}r_{9^{'}}^{2}+(\alpha_{1}+2\alpha_{2})r_{9^{'}}+4h_{2}}{\alpha_{1}}}\xi)}},
\end{equation*}
where $0<r_{9^{'}}<\psi<+\infty$, $h_{2}\in(0,h(\widetilde{P}_{2}))$, $0<\xi<\xi_{3^{''}}$ and \\
$\xi_{3^{''}}=\displaystyle\frac{4}{\sqrt{2\alpha_{1}}\sqrt[4]{\frac{2\alpha_{1}r_{9^{'}}^{2}+(\alpha_{1}+2\alpha_{2})r_{9^{'}}+4h_{2}}
{\alpha_{1}}}}\cdot\int_{0}^{\frac{\pi}{2}}{\frac{d\theta}{\sqrt{1-\frac{\sqrt{8\alpha_{1}^{2}r_{9^{'}}^{2}+4\alpha_{1}(\alpha_{1}+2\alpha_{2})r_{9^{'}}+16\alpha_{1}h_{2}}-(2\alpha_{2}+3\alpha_{1}r_{9^{'}})}{\sqrt{32\alpha^{2}_{1}r_{9^{'}}^{2}+16\alpha_{1}(\alpha_{1}+\alpha_{2})r_{9^{'}}+64\alpha_{1}h_{2}}}\cdot
\sin^{2}\theta}}}.$
\par\textbf{II.} When $\alpha_{1}>0$, $\alpha_{2}<0$ and $\alpha_{3}=\displaystyle-\frac{4\alpha_{2}^{3}}{27\alpha_{1}^{2}}$, we consider two subcases in this case.
\par\noindent(i) Consider the first type of unbounded orbits $\Omega^{1}$ and $\Omega_{1}$ shown in Fig. {\ref{fig4b}}, whose energy is equal to energy of the cusp $\widetilde{P}_{5}$.
Choosing $\psi_{0}=\psi(0)=+\infty$, we have
$$
-\int_{+\infty}^{\psi}{\frac{d\psi}{\sqrt{2\alpha_{1}}\sqrt{(\psi+\frac{2\alpha_{2}}{3\alpha_{1}})^{2}(\psi+\frac{2\alpha_{2}}{3\alpha_{1}})}}}=\int_{0}^{\xi}{d\xi},~~\xi>0.
$$
Noting that
$$
\int_{\psi}^{+\infty}{\frac{d\psi}{(\psi+\frac{2\alpha_{2}}{3\alpha_{1}})\sqrt{\psi+\frac{2\alpha_{2}}{3\alpha_{1}}}}}=\frac{2}{\sqrt{\psi+\frac{2\alpha_{2}}{3\alpha_{1}}}},
$$
we get the sixth type of unbounded solution of system (\ref{13})
\begin{equation*}
  \phi_{u_{6}}(\xi)=\sqrt{-\frac{2\alpha_{2}}{3\alpha_{1}}+\frac{2}{\alpha_{1}\xi^{2}}},~~\xi>0.\\
\end{equation*}

\par\noindent(ii) Consider other unbounded orbits, for example $\Omega_{2}$ and $\Omega_{3}$, shown in Fig. {\ref{fig4b}}, they can be expressed uniformly. Assuming that $r_{10^{'}}$ satisfies the constraint condition $0<r_{10^{'}}<\psi<+\infty$, $h_{3}\neq h(\widetilde{P}_{5})$ and choosing $\psi_{0}=\psi(0)=+\infty$, we obtain
\begin{equation*}
  \phi_{u_{7}}(\xi)=\sqrt{r_{10^{'}}-\sqrt[4]{\frac{2\alpha_{1}r_{10^{'}}^{2}+(\alpha_{1}+2\alpha_{2})r_{10^{'}}+4h_{3}}{\alpha_{1}}}
  +\frac{2\sqrt[4]{\frac{2\alpha_{1}r_{10^{'}}^{2}+(\alpha_{1}+2\alpha_{2})r_{10^{'}}+4h_{3}}{\alpha_{1}}}}
  {1-cn(\sqrt{2\alpha_{1}}\sqrt[4]{\frac{2\alpha_{1}r_{10^{'}}^{2}+(\alpha_{1}+2\alpha_{2})r_{10^{'}}+4h_{3}}{\alpha_{1}}}\xi)}},\\
\end{equation*}
where $0<\xi<\xi_{4^{''}}$ and\\
$\xi_{4^{''}}=\displaystyle\frac{4}{\sqrt{2\alpha_{1}}\sqrt[4]{\frac{2\alpha_{1}r_{10^{'}}^{2}+(\alpha_{1}+2\alpha_{2})r_{10^{'}}+4h_{3}}
{\alpha_{1}}}}\cdot\int_{0}^{\frac{\pi}{2}}{\frac{d\theta}{\sqrt{1-\frac{\sqrt{8\alpha_{1}^{2}r_{10^{'}}^{2}+4\alpha_{1}(\alpha_{1}+2\alpha_{2})r_{10^{'}}+16\alpha_{1}h_{3}}-(2\alpha_{2}+3\alpha_{1}r_{10^{'}})}{\sqrt{32\alpha^{2}_{1}r_{10^{'}}^{2}+16\alpha_{1}(\alpha_{1}+\alpha_{2})r_{10^{'}}+64\alpha_{1}h_{3}}}\cdot
\sin^{2}\theta}}}.$\\

\par\textbf{III.} Simlarly, when $\alpha_{1}>0$, $\alpha_{2} \geq 0$, one can check that the unbounded solution $\phi_{u_{8}}(\xi)$ has the same form as solution $\phi_{u_{1}}(\xi)$. We get the corresponding unbounded solution of system (\ref{13})
\begin{equation*}
  \phi_{u_{8}}(\xi)=\sqrt{r_{11^{'}}-\sqrt[4]{\frac{2\alpha_{1}r_{11^{'}}^{2}+(\alpha_{1}+2\alpha_{2})r_{11^{'}}+4h_{4}}{\alpha_{1}}}
  +\frac{2\sqrt[4]{\frac{2\alpha_{1}r_{11^{'}}^{2}+(\alpha_{1}+2\alpha_{2})r_{11^{'}}+4h_{4}}{\alpha_{1}}}}
  {1-cn(\sqrt{2\alpha_{1}}\sqrt[4]{\frac{2\alpha_{1}r_{11^{'}}^{2}+(\alpha_{1}+2\alpha_{2})r_{11^{'}}+4h_{4}}{\alpha_{1}}}\xi)}},\\
\end{equation*}
where $0<r_{11^{'}}<\psi<+\infty$, $h_{4}$ is a real number, $0<\xi<\xi_{5^{''}}$ and\\
$\xi_{5^{''}}=\displaystyle\frac{4}{\sqrt{2\alpha_{1}}\sqrt[4]{\frac{2\alpha_{1}r_{11^{'}}^{2}+(\alpha_{1}+2\alpha_{2})r_{11^{'}}+4h_{4}}
{\alpha_{1}}}}\cdot\int_{0}^{\frac{\pi}{2}}{\frac{d\theta}{\sqrt{1-\frac{\sqrt{8\alpha_{1}^{2}r_{11^{'}}^{2}+4\alpha_{1}(\alpha_{1}+2\alpha_{2})r_{11^{'}}+16\alpha_{1}h_{4}}-(2\alpha_{2}+3\alpha_{1}r_{11^{'}})}{\sqrt{32\alpha^{2}_{1}r_{11^{'}}^{2}+16\alpha_{1}(\alpha_{1}+\alpha_{2})r_{11^{'}}+64\alpha_{1}h_{4}}}\cdot
\sin^{2}\theta}}}.$
\section{Exact traveling wave solutions of Eq. (\ref{1})}\label{sec4}
\par In order to get the type I traveling wave solution of the GKMN equation, it only needs to substitute the solution $p(\xi)$ of system (\ref{8}) into the formula (\ref{6}). Here we list them in the Appendix for the sake of simplicity.
\par But, it is not so easy to get the type II traveling wave solution of the GKMN equation. It needs us to substitute the solutions $\phi(\xi)$ of system (\ref{13}) into the ODE  (\ref{12}) to solve  $\varphi(\xi)$, and then plug $\phi(\xi)$ and $\varphi(\xi)$ into the formula (\ref{10}).
\par\noindent (S1)
Noting that
\begin{equation*}
  \phi_{b_{1}}(\xi)=\sqrt{r_{1}+(r_{2}-r_{1})sn^{2}(\sqrt{\frac{\alpha_{1}(r_{3}-r_{1})}{2}}\xi)},~~-T_{0^{'}}<\xi<T_{0^{'}},\\
\end{equation*}
and
$$
\int{\frac{du}{1+k\cdot sn^{2}(u)}}=\frac{u}{2}+\frac{1}{2(1+k)}\tan^{-1}[(1+k)tn(u)\cdot nd(u)],
$$
we have
\begin{equation*}
\begin{aligned}
\varphi_{b_{1}}(\xi)
&=\int{\bigg(\displaystyle\frac{e}{am\phi_{b_{1}}^{2}(\xi)}+\displaystyle\frac{c}{2am}\bigg)d\xi}\\
&=\displaystyle\frac{e+cr_{1}}{2amr_{1}}\xi+\displaystyle\frac{r_{1}}{2r_{2}}\sqrt{\frac{2}{\alpha_{1}(r_{3}-r_{1})}}\cdot
\tan^{-1}\bigg(\frac{r_{2}}{r_{1}}tn(\sqrt{\frac{\alpha_{1}(r_{3}-r_{1})}{2}}\xi)\cdot nd(\sqrt{\frac{\alpha_{1}(r_{3}-r_{1})}{2}}\xi)\bigg)+C_{1},
\end{aligned}
\end{equation*}
where $C_{1}$ is a constant.
\par Thus we obtain the final solution $q_{1}(x,y,t)=\phi_{b_{1}}(\xi){\rm exp}(\varphi_{b_{1}}(\xi)-\mu t)i)$.
\par\noindent (S2)
Noting that
$$
\phi_{b_{2}}(\xi)=\sqrt{r_{4}+ \frac{(r_{5}-r_{4})(1-{\rm exp}(\sqrt{2\alpha_{1}(r_{5}-r_{4})}\xi))^{2}}{(1+{\rm exp}(\sqrt{2\alpha_{1}(r_{5}-r_{4})}\xi))^{2}}},~~-\infty<\xi<+\infty,\\
$$
we have
\begin{equation*}
\begin{aligned}
\varphi_{b_{2}}(\xi)
&=\int{\bigg(\displaystyle\frac{e}{am\phi_{b_{2}}^{2}(\xi)}+\displaystyle\frac{c}{2am}\bigg)d\xi}\\
&=\displaystyle\frac{2e\sqrt{2\alpha_{1}(r_{5}-r_{4})}+cr_{5}}{2amr_{5}}\xi+\displaystyle\frac{(r_{5}-r_{4})e}{8amr_{5}\sqrt{r_{4}(r_{5}-r_{4})}}\\
&\cdot
\arctan\bigg(\frac{r_{5}-r_{4}}{2\sqrt{r_{4}(r_{5}-r_{4})}}\exp(\sqrt{2\alpha_{1}(r_{5}-r_{4})}\xi)+\frac{r_{5}-r_{4}}{2\sqrt{r_{4}(r_{5}-r_{4})}}-\frac{r_{5}}{2r_{4}}\bigg)+C_{2},
\end{aligned}
\end{equation*}
where $C_{2}$ is a constant.
\par Thus we obtain the final solution $q_{2}(x,y,t)=\phi_{b_{2}}(\xi){\rm exp}(\varphi_{b_{2}}(\xi)-\mu t)i)$.
\par\noindent (S3)
Noting that
$$
\phi_{b_{3}}(\xi)=\sqrt{r_{6}+\displaystyle\frac{(2\alpha_{1}r_{6}+\alpha_{1}r_{7}+2\alpha_{2})(r_{7}-r_{6})sn^{2}(\sqrt{-\frac{
2\alpha_{1}r_{7}+\alpha_{1}r_{6}+2\alpha_{2}}{2}}\xi)}{\alpha_{1}(r_{7}-r_{6})
sn^{2}(\sqrt{-\frac{2\alpha_{1}r_{7}+\alpha_{1}r_{6}+2\alpha_{2}}{2}}\xi)-(2\alpha_{1}r_{7}+\alpha_{1}r_{6}+2\alpha_{2})}},~~-T_{1^{'}}<\xi<T_{1^{'}},
$$
and
$$
\int{\displaystyle\frac{du}{1\pm k\cdot sn(u)}}=\displaystyle\frac{1}{k^{'2}}[E(u)+k(1\pm k\cdot sn(u))\cdot cd(u)],
$$
where $k^{'}=\sqrt{1-k^{2}}$, we have
\begin{equation*}
\begin{aligned}
\varphi_{b_{3}}(\xi)
&=\int{\bigg(\displaystyle\frac{e}{am\phi_{b_{3}}^{2}(\xi)}+\displaystyle\frac{c}{2am}\bigg)d\xi}\\
&=\bigg(\frac{e\alpha_{1}}{am(3\alpha_{1}r_{6}+\alpha_{1}r_{7}+2\alpha_{2})}\sqrt{-\frac{2}{2\alpha_{1}r_{7}+\alpha_{1}r_{6}+2\alpha_{2}}}+\frac{c}{2am}\bigg)\xi\\
&+\frac{e(3\alpha_{1}r_{6}+\alpha_{1}r_{7}+2\alpha_{2}-\alpha_{1})\sqrt{-2(2\alpha_{1}r_{7}+\alpha_{1}r_{6}+2\alpha_{2})}}{am(3\alpha_{1}r_{6}+\alpha_{1}r_{7}
+2\alpha_{2})(2\alpha_{1}r_{6}+\alpha_{1}r_{7}+2\alpha_{2})}\cdot\bigg(E(\sqrt{-\frac{2\alpha_{1}r_{7}+\alpha_{1}r_{6}+2\alpha_{2}}{2}}\xi)\\
&+\sqrt {\frac{\alpha_{1}r_{7}-\alpha_{1}r_{6}}{2\alpha_{1}r_{7}+\alpha_{1}r_{6}+2\alpha_{2}}}\cdot
cd(\sqrt{-\frac{2\alpha_{1}r_{7}+\alpha_{1}r_{6}+2\alpha_{2}}{2}}\xi)\bigg)+C_{3},
\end{aligned}
\end{equation*}
where $C_{3}$ is a constant.
\par Thus we obtain the final solution $q_{3}(x,y,t)=\phi_{b_{3}}(\xi){\rm exp}(\varphi_{b_{3}}(\xi)-\mu t)i)$.
\par\noindent (S4)
Noting that
\begin{equation*}
  \phi_{u_{1}}(\xi)=\sqrt{r_{1^{'}}-\sqrt[4]{\frac{2\alpha_{1}r_{1^{'}}^{2}+(\alpha_{1}+2\alpha_{2})r_{1^{'}}+4h_{1}}{\alpha_{1}}}
  +\frac{2\sqrt[4]{\frac{2\alpha_{1}r_{1^{'}}^{2}+(\alpha_{1}+2\alpha_{2})r_{1^{'}}+4h_{1}}{\alpha_{1}}}}
  {1-cn(\sqrt{2\alpha_{1}}\sqrt[4]{\frac{2\alpha_{1}r_{1^{'}}^{2}+(\alpha_{1}+2\alpha_{2})r_{1^{'}}+4h_{1}}{\alpha_{1}}}\xi)}},\\
\end{equation*}
and
$$m
\int{\displaystyle\frac{1-cn(u)}{1+\alpha\cdot
cn(u)}du}=\displaystyle\frac{1}{\alpha}[-u+\displaystyle\frac{1}{1-\alpha}(\Pi(\varphi,\displaystyle\frac{\alpha^{2}}{\alpha^{2}-1},k)-\alpha\cdot f_{1})],
$$
where $0<\xi<\xi_{0^{''}}$, $\varphi=am(u)$ and
$
f_{1}=\sqrt{\frac{1-\alpha^{2}}{k^{2}+k^{'2}\alpha^{2}}}\cdot\tan^{-1}[\sqrt{\frac{k^{2}+k^{'2}\alpha^{2}}{1-\alpha^{2}}}\cdot sd(u)],~~k^{'}=\sqrt{1-k^{2}},
$
we have
\begin{equation*}
\begin{aligned}
\varphi_{u_{1}}(\xi)
&=\int{\bigg(\displaystyle\frac{e}{am\phi_{u_{1}}^{2}(\xi)}+\displaystyle\frac{c}{2am}\bigg)d\xi}\\
&=\bigg(\frac{e}{am(r_{1^{'}}-\sqrt[4]{\frac{2\alpha_{1}r_{1^{'}}^{2}+(\alpha_{1}+2\alpha_{2})r_{1^{'}}+4h_{1}}{\alpha_{1}}})}+\frac{c}{2am}\bigg)\xi\\
&+\frac{e(\sqrt[4]{\frac{2\alpha_{1}r_{1^{'}}^{2}+(\alpha_{1}+2\alpha_{2})r_{1^{'}}+4h_{1}}{\alpha_{1}}}+r_{1^{'}})}{2amr_{1^{'}}\sqrt[4]{8\alpha^{2}_{1}r_{1^{'}}^{2}+4\alpha_{1}(\alpha_{1}+2\alpha_{2})r_{1^{'}}+16\alpha_{1}h_{1}}(\sqrt[4]{\frac{2\alpha_{1}r_{1^{'}}^{2}+(\alpha_{1}+2\alpha_{2})r_{1^{'}}+4h_{1}}{\alpha_{1}}}-r_{1^{'}})}\\
&\cdot\Pi[am(\sqrt[4]{8\alpha^{2}_{1}r_{1^{'}}^{2}+4\alpha_{1}(\alpha_{1}+2\alpha_{2})r_{1^{'}}+16\alpha_{1}h_{1}}\xi),-\frac{(\sqrt[4]{\frac{2\alpha_{1}r_{1^{'}}^{2}+(\alpha_{1}+2\alpha_{2})r_{1^{'}}+4h_{1}}{\alpha_{1}}}-r_{1^{'}})^{2}}{4
\sqrt[4]{\frac{2\alpha_{1}r_{1^{'}}^{2}+(\alpha_{1}+2\alpha_{2})r_{1^{'}}+4h_{1}}{\alpha_{1}}}r_{1^{'}}},k]\\
&-\frac{e}{amr_{1^{'}}\sqrt[4]{8\alpha^{2}_{1}r_{1^{'}}^{2}+4\alpha_{1}(\alpha_{1}+2\alpha_{2})r_{1^{'}}+16\alpha_{1}h_{1}}}\\
&\cdot\sqrt{\frac{\sqrt[4]{\frac{2\alpha_{1}r_{1^{'}}^{2}+(\alpha_{1}+2\alpha_{2})r_{1^{'}}+4h_{1}}{\alpha_{1}}}r_{1^{'}}}{4r_{1^{'}}k^{2}\sqrt[4]{\frac{2\alpha_{1}r_{1^{'}}^{2}+(\alpha_{1}
+2\alpha_{2})r_{1^{'}}+4h_{1}}{\alpha_{1}}}+(\sqrt[4]{\frac{2\alpha_{1}r_{1^{'}}^{2}+(\alpha_{1}+2\alpha_{2})r_{1^{'}}+4h_{1}}{\alpha_{1}}}-r_{1^{'}})^{2}}}\\
&\cdot
\tan^{-1}\bigg(\sqrt{k^{2}+\frac{(\sqrt[4]{\frac{2\alpha_{1}r_{1^{'}}^{2}+(\alpha_{1}+2\alpha_{2})r_{1^{'}}+4h_{1}}{\alpha_{1}}}-r_{1^{'}})^{2}}{4\sqrt[4]{\frac{2\alpha_{1}r_{1^{'}}^{2}+(\alpha_{1}+2\alpha_{2})r_{1^{'}}+4h_{1}}{\alpha_{1}}}r_{1^{'}}}}\cdot\\
&sd(\sqrt[4]{8\alpha^{2}_{1}r_{1^{'}}^{2}+4\alpha_{1}(\alpha_{1}+2\alpha_{2})r_{1^{'}}+16\alpha_{1}h_{1}}\xi)\bigg)+C_{4},
\end{aligned}
\end{equation*}
where
$k^{2}=\displaystyle\frac{\sqrt{8\alpha_{1}^{2}r_{1^{'}}^{2}+4\alpha_{1}(\alpha_{1}+2\alpha_{2})r_{1^{'}}+16\alpha_{1}h_{1}}-(2\alpha_{2}+3\alpha_{1}r_{1^{'}})}{\sqrt{32\alpha^{2}_{1}r_{1^{'}}^{2}+16\alpha_{1}(\alpha_{1}+\alpha_{2})r_{1^{'}}+64\alpha_{1}h_{1}}}$,
$C_{4}$ is a constant.
\par Thus we obtain the final solution $q_{4}(x,y,t)=\phi_{u_{1}}(\xi){\rm exp}(\varphi_{u_{1}}(\xi)-\mu t)i)$. Similar calculation can be applied to the solutions $\phi_{u_{5}}(\xi)$, $\phi_{u_{7}}(\xi)$ and $\phi_{u_{8}}(\xi)$, we ignore them here for simplicity.
\par\noindent (S5)
Noting that
$$
\phi_{u_{2}}(\xi)=\sqrt{r_{2^{'}}+ \frac{(r_{3^{'}}-r_{2^{'}})(1+{\rm exp}(\sqrt{2\alpha_{1}(r_{3^{'}}-r_{2^{'}})}\xi))^{2}}{(1-{\rm
exp}(\sqrt{2\alpha_{1}(r_{3^{'}}-r_{2^{'}})}\xi))^{2}}},~~\xi>0,\\
$$
we have
\begin{equation*}
\begin{aligned}
\varphi_{u_{2}}(\xi)
&=\int{\bigg(\displaystyle\frac{e}{am\phi_{u_{2}}^{2}(\xi)}+\displaystyle\frac{c}{2am}\bigg)d\xi}\\
&=\frac{4e(r_{2^{'}}-r_{3^{'}})}{amr_{3^{'}}\sqrt{2\alpha_{1}(r_{3^{'}}-r_{2^{'}})}}\ln(r_{3^{'}}\sqrt{2\alpha_{1}(r_{3^{'}}-r_{2^{'}})}\xi+r_{3^{'}}-2r_{2^{'}})\\
&+\frac{4e(r_{2^{'}}-r_{3^{'}})^{2}}{amr_{3^{'}}\sqrt{2\alpha_{1}(r_{3^{'}}-r_{2^{'}})}(r_{3^{'}}\exp(\sqrt{2\alpha_{1}(r_{3^{'}}-r_{2^{'}})}\xi)+r_{3^{'}}-2r_{2^{'}})}+\frac{c}{2am}\xi+C_{5},
\end{aligned}
\end{equation*}
where $C_{5}$ is a constant.
\par Thus we obtain the final solution $q_{5}(x,y,t)=\phi_{u_{2}}(\xi){\rm exp}(\varphi_{u_{2}}(\xi)-\mu t)i)$.
\par\noindent (S6)
Noting that
\begin{equation*}
  \phi_{u_{3}}(\xi)=\sqrt{r_{4^{'}}+\frac{r_{6^{'}}-r_{4^{'}}}{sn^{2}(\sqrt{\displaystyle\frac{\alpha_{1}(r_{6^{'}}-r_{4^{'}})}{2}}\xi)}},~~0<\xi<\xi_{1^{''}},\\
\end{equation*}
and
$$
\int{\frac{du}{1+k\cdot sn^{2}(u)}}=\frac{u}{2}+\frac{1}{2(1+k)}\cdot\tan^{-1}[(1+k)\cdot tn(u)\cdot nd(u)],
$$
we have
\begin{equation*}
\begin{aligned}
\varphi_{u_{3}}(\xi)
&=\int{\bigg(\displaystyle\frac{e}{am\phi_{u_{3}}^{2}(\xi)}+\displaystyle\frac{c}{2am}\bigg)d\xi}\\
&=\frac{e+cr_{4^{'}}}{2amr_{4^{'}}}\xi-\frac{e(r_{6^{'}}-r_{4^{'}})}{4amr_{4^{'}}^{2}r_{6^{'}}}\sqrt{\frac{2}{\alpha_{1}(r_{6^{'}}-r_{4^{'}})}}\\
&\cdot\tan^{-1}\bigg(\frac{r_{6^{'}}}{r_{6^{'}}-r_{4^{'}}}\cdot tn(\sqrt{\frac{\alpha_{1}(r_{6^{'}}-r_{4^{'}})}{2}}\xi)\cdot
nd(\sqrt{\frac{\alpha_{1}(r_{6^{'}}-r_{4^{'}})}{2}}\xi)\bigg)+C_{6},
\end{aligned}
\end{equation*}
where $C_{6}$ is a constant.
\par Thus we obtain the final solution $q_{6}(x,y,t)=\phi_{u_{3}}(\xi){\rm exp}(\varphi_{u_{3}}(\xi)-\mu t)i)$.
\par\noindent (S7)
Noting that
$$
\phi_{u_{4}}(\xi)=\sqrt{r_{8^{'}}+(r_{8^{'}}-r_{7^{'}})\cdot \cot^{2}(\sqrt{\frac{\alpha_{1}(r_{8^{'}}-r_{7^{'}})}{2}}\xi)} ,~~0<\xi<\xi_{2^{''}},
$$
we have
\begin{equation*}
\begin{aligned}
\varphi_{u_{4}}(\xi)
&=\int{\bigg(\displaystyle\frac{e}{am\phi_{u_{4}}^{2}(\xi)}+\displaystyle\frac{c}{2am}\bigg)d\xi}\\
&=\frac{2e+cr_{7^{'}}}{2amr_{7^{'}}}\xi-\frac{e}{amr_{7^{'}}}\sqrt{\frac{2}{\alpha_{1}r_{8^{'}}}}\cdot\arctan\bigg(\sqrt{\frac{r_{8^{'}}}{r_{8^{'}}-r_{7^{'}}}}\cdot
\tan(\sqrt{\frac{\alpha_{1}(r_{8^{'}}-r_{7^{'}})}{2}}\xi)\bigg)+C_{7},
\end{aligned}
\end{equation*}
where $C_{7}$ is a constant.
\par Thus we obtain the final solution $q_{7}(x,y,t)=\phi_{u_{4}}(\xi){\rm exp}(\varphi_{u_{4}}(\xi)-\mu t)i)$.
\par\noindent (S8)
Noting that
\begin{equation*}
  \phi_{u_{6}}(\xi)=\sqrt{-\frac{2\alpha_{2}}{3\alpha_{1}}+\frac{2}{\alpha_{1}\xi^{2}}},~~\xi>0,\\
\end{equation*}
we have
\begin{equation*}
\begin{aligned}
\varphi_{u_{6}}(\xi)
&=\int{\bigg(\displaystyle\frac{e}{am\phi_{u_{6}}^{2}(\xi)}+\displaystyle\frac{c}{2am}\bigg)d\xi}\\
&=\frac{c\alpha_{2}-3e\alpha_{1}}{2am\alpha_{2}}\xi-\frac{9e\alpha_{1}^{2}}{4am\alpha_{2}^{2}}\sqrt{-\frac{3}{\alpha_{2}}}\cdot \arctan(\sqrt{-\frac{\alpha_{2}}{3}}\xi)+C_{8},
\end{aligned}
\end{equation*}
where $C_{8}$ is a constant.
\par Thus we obtain the final solution $q_{8}(x,y,t)=\phi_{u_{6}}(\xi){\rm exp}(\varphi_{u_{6}}(\xi)-\mu t)i)$.
\section{Discussion and conclusion}\label{sec5}
\par In this paper, by using the dynamical system method, we study two kinds of traveling wave systems of the GKMN equation and  obtain all type I and type II traveling wave solutions of it. Especially, some new solutions $q_{b_{7}}$, $q_{u_{\iota}}(\iota=2,2^{'},4..8,4^{'}..8^{'})$ and $q_{j}(j=1..8)$ have not been reported before, which not only help ones to understand the complicated physical phenomena described by the model further, but also can be used to verify the correctness of the numerical solutions. In particular, we can generate more  solutions of Eq. (\ref{1}) by using of these new solutions. For example, when $a=-1$ and $b=-2$, one can consider these new solutions as "seeds" and apply the perturbation $(n;M)$-fold Darboux transformation $\widetilde{\varphi}=T(\lambda)\varphi$ and $\widetilde{q}_{_{N-1}}=q_{0}-2B^{^{(N-1)}}$ mentioned in \cite{t3e}  to  construct more new solutions. In addition, this method is an effective way to deal with  traveling waves of a PDE and can be used in other PDE models.
\section*{Acknowledgements}
\par This work is supported by the Natural Science Foundation of China (No.11301043) and China Postdoctoral Science Foundation (No.2016M602663).
\section*{References}
\bibliography{ref}
\section*{Appendix}
\begin{table}[H]\tiny
\caption{Type I traveling wave solutions of Eq. (\ref{1}) }
\centering
\resizebox{12cm}{8.5cm}{
\begin{tabular}{|c|c|c|c|c|}
\hline
$ \displaystyle \kappa b(a\kappa \omega-r)$ & $\displaystyle\frac{\kappa b}{am}$ &\makecell[c]{traveling wave solution} & $\makecell[c]{p}$ & $\xi$\\
\hline
 \multirow{16}{*}{$\makecell[c]{\displaystyle \kappa b(a\kappa \omega-r)>0}$} & \multirow{11}{*}{$\makecell[c]{\displaystyle\frac{\kappa b}{am}<0}$}
  &   $\makecell[l] {q_{b_{1}}(x,y,t)=\Bigg(p_{1}+\frac{(p_{2}-p_{1})(p_{3}-p_{1})}{(p_{3}-p_{1})-(p_{3}-p_{2})sn^{2}(\sqrt{\frac{-\kappa b(p_{4}-p_{2})(p_{3}-p_{1})}{4am}}\xi)}\Bigg)\cdot{\rm
\exp}((\kappa x+\omega y-rt)i)}$
   &    $p_{1}<-\sqrt{\frac{a\kappa \omega-r}{2\kappa b}}<p_{2}<p<p_{3}<\sqrt{\frac{a\kappa \omega-r}{2\kappa b}}<p_{4}$ & $-T_{0}<\xi<T_{0}$ \\
 & & $\makecell[l]{q_{b_{2}}(x,y,t)=\Bigg(\frac{p_{6}-p_{5}}{2}\tanh(\frac{p_{6}-p_{5}}{2}\sqrt{-\frac{\kappa b}{am}}\xi)\Bigg)\cdot{\rm \exp}((\kappa x+\omega y-rt)i)
\\ q_{b_{2^{'}}}(x,y,t)=\Bigg(\frac{p_{6}-p_{5}}{2}\tanh(\frac{p_{5}-p_{6}}{2}\sqrt{-\frac{\kappa b}{am}}\xi)\Bigg)\cdot{\rm \exp}((\kappa x+\omega y-rt)i)}$ & $-\sqrt{\displaystyle \frac{a\kappa \omega-r}{2\kappa
b}}=p_{5}<p<p_{6}=\sqrt{\displaystyle\frac{a\kappa \omega-r}{2\kappa b}}$ &$-\infty<\xi<+\infty$\\

& & $\makecell[l]{q_{u_{0}}(x,y,t)=\sqrt[4]{-\displaystyle\frac{2amh_{0}}{\kappa b}}\cdot\sqrt{-1+\frac{2}{1-cn(2\sqrt[4]{-\frac{2\kappa bh_{0}}{am}}\xi)}}\cdot{\rm \exp}((\kappa x+\omega y-rt)i)}$ & $0<p<+\infty$ &$0<\xi<\xi_{0}$
 \\
& & $\makecell[l]{q_{u_{1}}(x,y,t)=\sqrt{\displaystyle\frac{a\kappa \omega-r}{2\kappa b}}\cdot\bigg(1+\frac{2}{\exp(\sqrt{\frac{2(r-a\kappa\omega)}{am}}\xi)-1}\bigg)\cdot{\rm \exp}((\kappa x+\omega y-rt)i)}$ & $0<\sqrt{\displaystyle\frac{a\kappa \omega-r}{2\kappa b}}<p<+\infty$ &$\xi>0$
 \\
& &  $\makecell[l]{q_{u_{{1}^{'}}}(t,x,y)=-\sqrt{\displaystyle\frac{a\kappa \omega-r}{2\kappa b}}\cdot\bigg(1+\frac{2}{\exp(\sqrt{\frac{2(r-a\kappa\omega)}{am}}\xi)-1}\bigg)\cdot{\rm \exp}((\kappa x+\omega y-rt)i)}$ & $-\infty<p<-\sqrt{\displaystyle \frac{a\kappa \omega-r}{2\kappa b}}<0$ &$\xi>0$
\\
& & $\makecell[l]{q_{u_{{2}}}(x,y,t)=\displaystyle\frac{p_{4^{'}}}{sn(p_{4^{'}}\sqrt{-\frac{\kappa b}{am}}\xi)}\cdot{\rm \exp}((\kappa x+\omega y-rt)i)}$
&$p_{1^{'}}<-\sqrt{\displaystyle \frac{a\kappa \omega-r}{2\kappa b}}<p_{2^{'}} <0<p_{3^{'}}<\sqrt{\displaystyle \frac{a\kappa \omega-r}{2\kappa b}}<p_{4^{'}}<p<+\infty$ & $0<\xi<\xi_{1}$
\\
& & $\makecell[l]{q_{u_{{2}^{'}}}(x,y,t)=\displaystyle\frac{p_{1^{'}}}{sn(p_{1^{'}}\sqrt{-\frac{\kappa b}{am}}\xi)}\cdot{\rm \exp}((\kappa x+\omega y-rt)i)}$
&$-\infty<p<p_{1^{'}}<-\sqrt{\frac{a\kappa \omega-r}{2\kappa b}}<p_{2^{'}}<0<p_{3^{'}}<\sqrt{\frac{a\kappa \omega-r}{2\kappa b}}<p_{4^{'}}$ & $0<\xi<\xi_{1^{'}}$
\\
& & $\makecell[l]{q_{u_{{3}}}(x,y,t)=p_{5^{'}}\csc(p_{5^{'}}\sqrt{-\frac{\kappa b}{am}}\xi)\cdot{\rm \exp}((\kappa x+\omega y-rt)i)}$ & $0<\sqrt{\frac{a\kappa \omega-r}{2\kappa b}}<p_{5^{'}}<p<+\infty$ &$0<\xi<\xi_{2}$
\\
& & $\makecell[l]{q_{u_{{3}^{'}}}(x,y,t)=p_{5^{'}}\csc(-p_{5^{'}}\sqrt{-\frac{\kappa b}{am}}\xi)\cdot{\rm \exp}((\kappa x+\omega y-rt)i)}$ & $-\infty<p<p_{5^{'}}<-\sqrt{\frac{a\kappa \omega-r}{2\kappa b}}<0$ &$0<\xi<\xi_{2^{'}}$
\\
& & $\makecell[l]{q_{u_{{4}}}(x,y,t)=\sqrt{-\displaystyle\frac{\kappa bp_{6^{'}}^{2}-a\kappa\omega+r}{\kappa b}+\frac{2\kappa bp_{6^{'}}^{2}-a\kappa\omega+r}{\kappa bsn^{2}(\sqrt{-\frac{2\kappa
bp_{6^{'}}^{2}-a\kappa\omega+r}{am}}\xi)}}\cdot{\rm \exp}((\kappa x+\omega y-rt)i)}$ & $0<\sqrt{\displaystyle\frac{a\kappa\omega-r}{\kappa b}}<p_{6^{'}}<p<+\infty$ &$0<\xi<\xi_{3}$
\\
& & $\makecell[l]{q_{u_{{4}^{'}}}(x,y,t)=-\sqrt{-\displaystyle\frac{\kappa bp_{6^{'}}^{2}-a\kappa\omega+r}{\kappa b}+\frac{2\kappa bp_{6^{'}}^{2}-a\kappa\omega+r}{\kappa bsn^{2}(\sqrt{-\frac{2\kappa
bp_{6^{'}}^{2}-a\kappa\omega+r}{am}}\xi)}}\cdot{\rm \exp}((\kappa x+\omega y-rt)i)}$ & $-\infty<p<-p_{6^{'}}<-\sqrt{\displaystyle\frac{a\kappa\omega-r}{\kappa b}}<0$ &$0<\xi<\xi_{3^{'}}$
\\
\cline{2-5}
& \multirow{5}{*}{$\makecell[c]{\displaystyle\frac{\kappa b}{am}>0}$}
  &   $\makecell[l] {q_{b_{3}}(x,y,t)=\Bigg(p_{10}-\frac{(p_{10}-p_{8})(p_{10}-p_{7})}{(p_{10}-p_{8})+(p_{8}-p_{7})sn^{2}(\sqrt{\frac{\kappa
b}{am}}\frac{\sqrt{(p_{10}-p_{8})(p_{9}-p_{7})}}{2}\xi)}\Bigg)\cdot{\rm \exp}((\kappa x+\omega y-rt)i)}$ & $p_{7}<p<p_{8}<0<p_{9}<\sqrt{\frac{a\kappa \omega-r}{2\kappa b}}<p_{10}$ &$-T_{1}<\xi<T_{1}$\\
& & $\makecell[l]{q_{b_{3^{'}}}(x,y,t)=\Bigg(p_{8}+\frac{(p_{9}-p_{8})(p_{10}-p_{8})}{(p_{10}-p_{8})-(p_{10}-p_{9})sn^{2}(\sqrt{\frac{\kappa
b}{am}}\frac{\sqrt{(p_{10}-p_{8})(p_{9}-p_{7})}}{2}\xi)}\Bigg)\cdot{\rm \exp}((\kappa x+\omega y-rt)i)}$ & $p_{7}<-\sqrt{\frac{a\kappa \omega-r}{2\kappa b}}<p_{8}<0<p_{9}<p<p_{10}$ &$-T_{1}<\xi<T_{1}$\\
& & $\makecell[l]{q_{b_{4}}(x,y,t)=\Bigg(\frac{-2p_{11}{\rm exp}(\sqrt{\frac{\kappa b}{am}}p_{11}\mid\xi\mid)}{{{\rm \exp}(2\sqrt{\frac{\kappa b}{am}}p_{11}\mid\xi\mid)}+1}\Bigg)\cdot{\rm \exp}((\kappa x+\omega y-rt)i)}$ &  $-p_{11}<p<0<\sqrt{\frac{a\kappa \omega-r}{2\kappa b}}<p_{11}$ &$-\infty<\xi<+\infty$
 \\
& & $\makecell[l]{q_{b_{4^{'}}}(x,y,t)=\Bigg(\frac{2p_{11}{\rm \exp}(\sqrt{\frac{\kappa b}{am}}p_{11}\mid\xi\mid)}{{{\rm \exp}(2\sqrt{\frac{\kappa b}{am}}p_{11}\mid\xi\mid)}+1}\Bigg)\cdot{\rm exp}((\kappa x+\omega y-rt)i)}$
  & $-p_{11}<-\sqrt{\frac{a\kappa \omega-r}{2\kappa b}}<0<p<p_{11}$ &$-\infty<\xi<+\infty$
\\
& & $\makecell[l]{q_{b_{5}}(x,y,t)=\sqrt{\displaystyle\frac{(\kappa bp_{12}^{2}-a\kappa\omega+r)p_{12}^{2}sn^{2}(\sqrt{\frac{2\kappa bp_{12}^{2}-a\kappa\omega+r}{am}}\xi)}{2\kappa
bp_{12}^{2}-a\kappa\omega+r-\kappa bp_{12}^{2}sn^{2}(\sqrt{\frac{2\kappa bp_{12}^{2}-a\kappa\omega+r}{am}}\xi)}}\cdot{\rm \exp}((\kappa x+\omega y-rt)i) }$
  & $-p_{12}<p<p_{12}$  &$-T_{2}<\xi<T_{2}$
\\
\hline
\multirow{5}{*}{$\makecell[c]{\displaystyle \kappa b(a\kappa \omega-r)}<0$} & \multirow{1}{*}{$\makecell[c]{\displaystyle\frac{\kappa b}{am}<0}$}
  &   $\makecell[l] {q_{u_{{5}}}(x,y,t)=\sqrt[4]{-\displaystyle\frac{2amh_{0^{'}}}{\kappa b}}\cdot\sqrt{-1+\frac{2}{1-cn(2\sqrt[4]{-\frac{2\kappa bh_{0^{'}}}{am}}\xi)}}\cdot{\rm \exp}((\kappa x+\omega y-rt)i)}$ & $0<p<+\infty$ &$0<\xi<\xi_{4}$\\
  & & $\makecell[l] {q_{u_{{6}}}(x,y,t)=2\sqrt{\frac{r-a\kappa\omega}{\kappa b}}\cdot\displaystyle\frac{\exp(\sqrt{\frac{a\kappa\omega-r}{am}}\xi)}{\exp(2\sqrt{\frac{a\kappa\omega-r}{am}}\xi)-1}\cdot{\rm \exp}((\kappa x+\omega y-rt)i)}$ & $0<p<+\infty$ &$\xi>0$\\
 & & $\makecell[l]{q_{u_{{6}^{'}}}(x,y,t)=2\sqrt{\frac{r-a\kappa\omega}{\kappa b}}\cdot\displaystyle\frac{\exp(\sqrt{\frac{a\kappa\omega-r}{am}}\xi)}{1-\exp(2\sqrt{\frac{a\kappa\omega-r}{am}}\xi)}\cdot{\rm \exp}((\kappa x+\omega y-rt)i)}$ & $-\infty<p<0$ &$~\xi>0$\\
& & $\makecell[l]{q_{u_{{8}}}(x,y,t)=\sqrt{-\displaystyle\frac{\kappa bp_{7^{'}}^{2}-a\kappa\omega+r}{\kappa b}+\frac{2\kappa bp_{7^{'}}^{2}-a\kappa\omega+r}{\kappa bsn^{2}(\sqrt{-\frac{2\kappa
bp_{7^{'}}^{2}-a\kappa\omega+r}{am}}\xi)}}\cdot{\rm \exp}((\kappa x+\omega y-rt)i)}$ & $0<p_{7^{'}}<p<+\infty$ &$0<\xi<\xi_{5}$
 \\
& & $\makecell[l]{q_{u_{{8^{'}}}}(x,y,t)=-\sqrt{-\displaystyle\frac{\kappa bp_{7^{'}}^{2}-a\kappa\omega+r}{\kappa b}+\frac{2\kappa bp_{7^{'}}^{2}-a\kappa\omega+r}{\kappa bsn^{2}(\sqrt{-\frac{2\kappa
bp_{7^{'}}^{2}-a\kappa\omega+r}{am}}\xi)}}\cdot{\rm \exp}((\kappa x+\omega y-rt)i)}$ & $-\infty<p<-p_{7^{'}}<0$ &$0<\xi<\xi_{5^{'}}$
 \\
\cline{2-5}
& \multirow{2}{*}{$\makecell[c]{\displaystyle\frac{\kappa b}{am}>0}$}     & $\makecell[l] {q_{b_{6}}(x,y,t)=\sqrt{\displaystyle\frac{(\kappa bp_{13}^{2}-a\kappa\omega+r)p_{13}^{2}sn^{2}(\sqrt{\frac{2\kappa bp_{13}^{2}-a\kappa\omega+r}{am}}\xi)}{2\kappa
bp_{13}^{2}-a\kappa\omega+r-\kappa bp_{13}^{2}sn^{2}(\sqrt{\frac{2\kappa bp_{13}^{2}-a\kappa\omega+r}{am}}\xi)}}\cdot{\rm \exp}((\kappa x+\omega y-rt)i) }$
  & $-p_{13}<p<p_{13}$  &$-T_{3}<\xi<T_{3}$\\
\hline
\multirow{5}{*}{$\makecell[c]{\displaystyle \kappa b(a\kappa \omega-r)=0}$} & $\makecell[c]{\displaystyle\frac{\kappa b}{am}<0}$
  &   $\makecell[l] {q_{u_{{5^{'}}}}(x,y,t)=\sqrt{\displaystyle\frac{2}{1-cn(8h_{0^{'}}\xi)}-1}\cdot{\rm \exp}((\kappa x+\omega y-rt)i)}$
   &     $0<p<+\infty$ & $0<\xi<\xi_{4^{'}}$ \\
 & & $\makecell[l]
  {q_{u_{{7}}}(x,y,t)=\sqrt{-\frac{am}{\kappa b}}\cdot\frac{1}{\xi}\cdot{\rm \exp}((\kappa x+\omega y-rt)i)}$
   &     $0<p<+\infty$ & $\xi>0$ \\
 & & $\makecell[l]{q_{u_{{7^{'}}}}(x,y,t)=-\sqrt{-\frac{am}{\kappa b}}\cdot\frac{1}{\xi}\cdot{\rm \exp}((\kappa x+\omega y-rt)i)}$ & $-\infty<p<0$ &$\xi>0$\\
& & $\makecell[l]{q_{u_{{9}}}(x,y,t)=\sqrt{-\displaystyle\frac{\kappa bp_{8^{'}}^{2}}{\kappa b}+\frac{2\kappa bp_{8^{'}}^{2}}{\kappa bsn^{2}(\sqrt{-\frac{2\kappa
bp_{8^{'}}^{2}}{am}}\xi)}}\cdot{\rm \exp}((\kappa x+\omega y-rt)i)}$ & $0<p_{8^{'}}<p<+\infty$ &$0<\xi<\xi_{6}$
 \\
& & $\makecell[l]{q_{u_{{9^{'}}}}(x,y,t)=-\sqrt{-\displaystyle\frac{\kappa bp_{8^{'}}^{2}}{\kappa b}+\frac{2\kappa bp_{8^{'}}^{2}}{\kappa bsn^{2}(\sqrt{-\frac{2\kappa
bp_{8^{'}}^{2}}{am}}\xi)}}\cdot{\rm \exp}((\kappa x+\omega y-rt)i)}$ & $-\infty<p<-p_{8^{'}}<0$ &$0<\xi<\xi_{6^{'}}$
 \\
\cline{2-5}
& $\makecell[c]{\displaystyle\frac{\kappa b}{am}>0}$
  &   $\makecell[l] {q_{b_{7}}(x,y,t)=\sqrt{\displaystyle\frac{p_{14}^{2}sn^{2}(\sqrt{\frac{2\kappa bp_{14}^{2}}{am}}\xi)}{2-sn^{2}(\sqrt{\frac{2\kappa bp_{14}^{2}}{am}}\xi)}}\cdot{\rm \exp}((\kappa x+\omega y-rt)i) }$
  & $-p_{14}<p<p_{14}$  &$-T_{4}<\xi<T_{4}$\\
\hline
\end{tabular}}
\end{table}
\end{document}